\documentclass[12pt,a4paper,reqno]{amsart} 
\pagestyle{plain}
\usepackage{amssymb}
\usepackage{latexsym}
\usepackage{amsmath}
\usepackage{mathrsfs}
\usepackage{calc}                         

\newcommand{\rr}[1]{\mathbf R^{#1}}

\newcommand{\nm}[2]{\Vert #1\Vert _{#2}}

\newcommand{\sets}[2]{\{ {\,}#1{\,};{\,}#2{\,}\} }

\newcommand{\cdo}{\, \cdot \, }

\newcommand{\supp}{\operatorname{supp}}

\newcommand{\eabs}[1]{\langle #1\rangle}

\newcommand{\vrum}{\vspace{0.1cm}}
\newcommand{\FL}{\mathscr F \! L}
\newcommand{\masfR}{\mathsf R}
\DeclareMathOperator{\WF}{WF}

\setcounter{section}{\value{section}-1}   

\numberwithin{equation}{section}          

\newtheorem{thm}{Theorem}
\numberwithin{thm}{section}
\newtheorem{prop}[thm]{Proposition}

\newtheorem{lemma}[thm]{Lemma}

\theoremstyle{definition}

\theoremstyle{remark}



\title{\textbf {A note on products in weighted Fourier-Lebesgue spaces}}
%
%
%

\frenchspacing

\author{Karoline Johansson}

\address{Department of Computer science, Mathematics and Physics,
Linn{\ae}us University, V{\"a}xj{\"o}, Sweden
}

\email{karoline.johansson@lnu.se}

\author{Stevan Pilipovi\' c}

\address{Department of Mathematics and Informatics,
University of Novi Sad, Novi Sad, Serbia}

\email{stevan.pilipovic@dmi.uns.ac.rs}

\author{Nenad Teofanov}

\address{Department of Mathematics and Informatics,
University of Novi Sad, Novi Sad, Serbia}

\email{nenad.teofanov@dmi.uns.ac.rs}

\author{Joachim Toft}

\address{Department of Computer science, Mathematics and Physics,
Linn{\ae}us University, V{\"a}xj{\"o}, Sweden
}

\email{joachim.toft@lnu.se}

\begin{document}

\begin{abstract}
We consider multiplication properties of elements in weighted
Fourier Lebesgue and modulation spaces. Especially we extend some results
in  \cite{PTT2}.
\end{abstract}

\maketitle

\par

\section{Introduction}\label{sec0}

\par

In this paper we extend some results from  \cite{PTT2} concerning multiplication
properties in Fourier-Lebesgue and modulation spaces.

\par

One of the goals is to estimate the parameters $s$ and $q$ such that
$ f_1 f_2  \in \mathscr F L^q _s $ if $f_j \in  \mathscr F L^{q_j} _{s_j}$, $ j = 1,2.$
This is done in Theorem \ref{together}.
Just to give a flavor of our results, we give below a special interesting
case when $ q_1 $ or $ q_2 $ is greater than $2$. Here and in what follows it is convenient to
consider the functional
\begin{equation}\label{Rqfunctional}
\masfR (q)\equiv 2-\frac 1{q_0}-\frac 1{q_1}-\frac 1{q_2},\qquad q=(q_0,q_1,q_2)\in [1,\infty]^3.
\end{equation}

\par

\begin{prop}\label{prop0}
Let $ 0 \leq s_j + s_k$, $j\neq k$, $\masfR (q)$ be as in
\eqref{Rqfunctional}, and let
$f_j \in  \mathscr F L^{q_j} _{s_j}$, $ j = 1,2$. If
$$
0 \le \masfR (q)\le \frac 12
\quad \text{and}\quad
0\leq s_0+s_1 + s_2-  d\cdot  \masfR (q),
$$
with the strict inequality when $\masfR (q)>0$ and $s_j=d\cdot  \masfR (q) $
for some $j=0,1,2$, then  $f_1 f_2  \in \mathscr F L^{q_0'} _{-s_0}$.
\end{prop}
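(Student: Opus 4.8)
The plan is to move to the Fourier side, turn the membership into a trilinear estimate by duality, and prove that estimate by a dyadic decomposition keyed to which of the three frequencies is the smallest; alternatively one simply observes that the statement is the specialization of Theorem~\ref{together}. Write $G_j=\widehat{f_j}$ and recall $\widehat{f_1f_2}=(2\pi)^{-d}\,G_1\ast G_2$. Using $(\FL^{q_0}_{s_0})^{*}=\FL^{q_0'}_{-s_0}$, the conclusion $f_1f_2\in\FL^{q_0'}_{-s_0}$ together with the corresponding norm bound is equivalent to
\[
\iint_{\rr{2d}}\frac{H_0(\xi_1+\xi_2)\,H_1(\xi_1)\,H_2(\xi_2)}{\eabs{\xi_1+\xi_2}^{s_0}\,\eabs{\xi_1}^{s_1}\,\eabs{\xi_2}^{s_2}}\,d\xi_1\,d\xi_2\ \lesssim\ \norm{H_0}_{L^{q_0}}\,\norm{H_1}_{L^{q_1}}\,\norm{H_2}_{L^{q_2}}
\]
for all non-negative $H_0,H_1,H_2$; here $H_j$ stands for $\eabs{\cdot}^{s_j}\abp{G_j}$, so that $\norm{H_j}_{L^{q_j}}=\norm{f_j}_{\FL^{q_j}_{s_j}}$ for $j=1,2$ while $\norm{H_0}_{L^{q_0}}$ is the norm of the test element. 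The first task is to record this reduction (it suffices to test against Schwartz functions, and $f_1f_2$ then belongs to $\FL^{q_0'}_{-s_0}$ together with the bound).

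Next I would decompose dyadically, $H_j=\sum_{k\ge0}H_j^{(k)}$ with $H_j^{(k)}$ supported where $\eabs{\xi_j}\sim2^{k}$ (and $k=0$ the unit ball). Because of the relation $\xi_0=\xi_1+\xi_2$, a triple of scales $(k_0,k_1,k_2)$ contributes only when the two largest of them agree up to $O(1)$; equivalently, exactly one of the three frequencies is essentially the smallest. One therefore splits the summation into the three regions labelled by which frequency is smallest --- say $\eabs{\xi_0}$ is smallest, so $\eabs{\xi_1}\sim\eabs{\xi_2}$ and $\xi_1\approx-\xi_2$, and the two symmetric variants. On a fixed piece one applies H\"older's inequality to separate $H_0^{(k_0)}$ off, then the sharp Young inequality $\norm{H_1^{(k_1)}\ast H_2^{(k_2)}}_{L^{a'}}\le\norm{H_1^{(k_1)}}_{L^{b}}\norm{H_2^{(k_2)}}_{L^{c}}$ with $1/b+1/c=2-1/a$, and bounds each shell factor by $\norm{H_j^{(k_j)}}_{L^{p_j}}\lesssim2^{d\,k_j(1/p_j-1/q_j)}\norm{H_j^{(k_j)}}_{L^{q_j}}$, valid for $p_j\le q_j$. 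The weights contribute $\eabs{\xi_j}^{-s_j}\sim2^{-k_js_j}$; in the region where $\eabs{\xi_0}$ is smallest the two large frequencies produce $\eabs{\xi_1}^{-s_1}\eabs{\xi_2}^{-s_2}\sim2^{-m(s_1+s_2)}$ (with $2^{m}$ their common size), which is why one needs $s_1+s_2\ge0$; on the sub-region where all three sizes are comparable one uses $s_0+s_1+s_2\ge d\,\masfR(q)$.

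The crux is then the bookkeeping of exponents, and it is here that $0\le\masfR(q)\le\tfrac12$ enters: one has to check that the freedom in choosing the Young exponents $(a,b,c)$ region by region suffices to make every resulting geometric series in the dyadic scales converge --- i.e.\ that either the exponent of the largest scale is strictly negative, or it vanishes while a second exponent is strictly negative and the remaining sum over shells is absorbed by an $\ell^{q_j}$ norm. The inequality $\masfR(q)\ge0$ is exactly the condition $1/q_0+1/q_1+1/q_2\le2$ implicit in Young's inequality, hence makes the admissible set of exponents nonempty; and $\masfR(q)\le\tfrac12$, i.e.\ $1/q_0+1/q_1+1/q_2\ge\tfrac32$, is what keeps the forced ``excess'' $\masfR(q)$ small enough to be distributed over $1/a,1/b,1/c$ within the constraints $1\le a,b,c$, so that the pairwise hypotheses $s_j+s_k\ge0$ suffice in every region. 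Finally, a borderline configuration --- where after the best choice of exponents one scale-exponent equals $0$ --- gives a logarithmically divergent sum unless one of the weaker inequalities is strict, which is precisely the extra requirement ``$0<s_0+s_1+s_2-d\,\masfR(q)$ when $\masfR(q)>0$ and $s_j=d\,\masfR(q)$ for some $j$''.

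In practice I would isolate first the flat endpoint $\masfR(q)=0$, where the estimate is just H\"older combined with the sharp Young inequality and no decomposition is needed, and then handle the general case by the decomposition above (or cite Theorem~\ref{together}). The step I expect to be genuinely delicate is the last one: producing, in each of the three regions and for each position of $(s_0,s_1,s_2)$ relative to the boundary of the parameter set, an admissible triple $(a,b,c)$ that simultaneously absorbs the weight loss, the shell-embedding loss, and the summation over dyadic scales, including every borderline case.
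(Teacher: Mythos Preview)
Your observation that Proposition~\ref{prop0} is simply the specialization of Theorem~\ref{together} is exactly how the paper treats it --- no separate proof is given. The substantive content is therefore the comparison between your direct trilinear argument and the paper's proof of Theorem~\ref{together} (via Proposition~\ref{proptogether}).

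The two routes share the same skeleton: reduce to a weighted convolution estimate, split according to the relative sizes of $\eabs{\xi_0}=\eabs{\xi_1+\xi_2}$, $\eabs{\xi_1}$, $\eabs{\xi_2}$, and treat each region separately. The paper uses the continuous five-region decomposition $\Omega_1,\dots,\Omega_5$ of \eqref{theomegasets} (your three regions further subdivided), expresses the weighted convolution as $T_F(u_1,u_2)$ with $F(\xi,\eta)=\eabs{\xi}^{-s_0}\eabs{\xi-\eta}^{-s_1}\eabs{\eta}^{-s_2}$, and then controls each $T_{F_j}$ via the abstract bilinear operator bounds of Proposition~\ref{prop1}. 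The crucial range $0\le\masfR(q)\le\tfrac12$ is obtained there not by dyadic bookkeeping but by proving the endpoint $\masfR(q)=\tfrac12$ via Cauchy--Schwarz and the weighted arithmetic--geometric mean, the endpoint $\masfR(q)=0$ via Young, and interpolating (multilinear complex interpolation from \cite{BL}). The borderline strict-inequality clause emerges from the explicit mixed-norm computations of Lemma~\ref{intestimates} (the $\log\eabs{\xi}$ terms when $s_j=d/r$). Your dyadic approach --- H\"older, sharp Young on shells, then shell embeddings $L^{p_j}\hookrightarrow L^{q_j}$ with the $2^{dk(1/p_j-1/q_j)}$ loss, and summation of geometric series --- is a legitimate and standard alternative; it is more elementary in that it avoids the interpolation machinery, but as you correctly anticipate, the case analysis producing an admissible Young triple $(a,b,c)$ in every region and every position of $(s_0,s_1,s_2)$ relative to the boundary is genuinely tedious, and the paper's interpolation argument sidesteps precisely that bookkeeping.
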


\par

We note that Proposition \ref{prop0} is a special case Theorem \ref{together} below.
Moreover, by letting $ q_1 = q_2 = q _0= 2 $, 
Proposition \ref{prop0} agrees with
the H\"ormander theorem on microlocal regularity of a product
\cite[Theorem 8.3.1]{Hrm-nonlin}.

\par

From Theorem \ref{together} below it also follows that Proposition \ref{prop0}
remains true after the Fourier Lebesgue spaces $\mathscr FL_{s_j}^{q_j}$ have
been replaced by the modulation or Wiener amalgam spaces $M^{p_j,q_j}_{s_j}$
and $W^{p_j,q_j}_{s_j}$, respectively, when
\begin{equation}\label{pjHoldercond}
\frac 1{p_0} + \frac 1{p_1} + \frac 1{p_2} = 1.
\end{equation}

\par

\subsection{Basic notions and notation} \label{notation}

\par

In this subsection we collect some notation and notions which will be used in the sequel.

\par

We put $\mathbf N =\{0,1,2,\dots \}$, $\eabs x =(1+|x|^2)^{1/2}$,
for $x\in \rr d$, and $A\lesssim B$ to
indicate $A\leq c B$ for a suitable constant $c>0$.
The scalar product in $L^2 $ is denoted  by
$(\cdo  , \cdo )_{L^2} = (\cdo  , \cdo )$.

\par

\section{Main results}\label{sec1}

\par

In this section we extend some results from  \cite{PTT2}.
%
%
%
%
Our main main result is Theorem \ref{together}, where we present sufficient conditions on
$s_j\in \mathbf R$ and $q_j\in [1,\infty]$, $j=0,1,2$, to ensure that $f_1f_2\in
\mathscr F L^{q_0} _{s_0} $ when $f_j \in  \mathscr F L^{q_j} _{s_j}$,
$ j = 1,2$. The result also include related multiplication properties for
modulation and Wiener amalgam spaces.

\par

%
%
%
%
%
%
%

\par

Let $\phi \in \mathscr S(\rr d)\setminus 0$, $s,t\in \mathbf R$ and $p,q\in [1,\infty]$  be fixed. We
recall that the modulation space $M^{p,q}_{s,t}(\rr d)$ consists of all $f\in \mathscr S'(\rr d)$
such that
$$
\nm f{M^{p,q}_{s,t}}\equiv \left ( \int _{\rr d} \left ( \int _{\rr d} |V_\phi f(x,\xi )\eabs x^t
\eabs \xi ^s|^p\, dx  \right )^{q/p}d\xi  \right )^{1/q}
$$
is finite (with obvious interpretation of the integrals when $p=\infty$ or $q=\infty$). In the same
way, the modulation space $W^{p,q}_{s,t}(\rr d)$ consists of all $f\in \mathscr S'(\rr d)$
such that
$$
\nm f{W^{p,q}_{s,t}}\equiv \left ( \int _{\rr d} \left ( \int _{\rr d} |V_\phi f(x,\xi )\eabs x^t
\eabs \xi ^s|^q\, d\xi  \right )^{p/q}dx  \right )^{1/p}
$$
is finite.

\par

\begin{lemma}\label{1/2}
Assume that $x_j= 1/q_j$. If $0 \le x_j \le 1$, then 
$$
\masfR(q)= 2-\sum _{j=0}^2 x _j
$$
and the following statements are equivalent  
\begin{equation}\label{lastineq1}
0 \le \masfR (q) \le  \frac{1}{2}
\end{equation}
and
\begin{equation}\tag*{(\ref{lastineq1})$'$}
0 \le \masfR (q) \le \max \left ( \frac{1}{2}, \min \left (
\frac {1}{q_0}, \frac {1}{q_1},\frac {1}{q_2} \right ) \right ).
\end{equation}
\end{lemma}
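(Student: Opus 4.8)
The plan is to handle the two assertions in turn. The identity $\masfR(q)=2-\sum_{j=0}^2 x_j$ is just a rewriting of the definition \eqref{Rqfunctional} after the substitution $x_j=1/q_j$, and the constraint $0\le x_j\le 1$ merely encodes $q_j\in[1,\infty]$; so nothing needs proof there. For the equivalence, I would first dispose of the easy implication: since the bound on the right of (\ref{lastineq1})$'$ is of the form $\max(1/2,\cdot)$, it is always at least $1/2$, whence \eqref{lastineq1} trivially implies (\ref{lastineq1})$'$.

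The content is in the reverse implication, and here the argument I have in mind is a short averaging estimate run by contradiction. Suppose $q$ satisfies (\ref{lastineq1})$'$ but $\masfR(q)>1/2$. Put $m=\min(x_0,x_1,x_2)$. Since $\masfR(q)\le\max(1/2,m)$ and $\masfR(q)>1/2$, the maximum must be attained at $m$, so $\masfR(q)\le m\le x_j$ for $j=0,1,2$. Adding these three inequalities and using $x_0+x_1+x_2=2-\masfR(q)$ gives $3\masfR(q)\le 2-\masfR(q)$, i.e.\ $\masfR(q)\le 1/2$ --- contradicting the assumption. Hence $\masfR(q)\le 1/2$, and combined with the lower bound $0\le\masfR(q)$ already present in (\ref{lastineq1})$'$ this is exactly \eqref{lastineq1}.

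I do not anticipate a genuine obstacle here: the only place demanding slight care is the bookkeeping with the nested $\max$ and $\min$, namely the observation that the regime $\masfR(q)>1/2$ automatically forces the branch $\max(1/2,m)=m$, after which the one-line summation closes the argument. (Incidentally, $0\le x_j\le 1$ is not even needed for this half, though it is harmless to keep in the hypotheses.)
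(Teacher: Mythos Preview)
Your proof is correct and follows essentially the same route as the paper's: both argue the nontrivial implication by contradiction, observing that $\masfR(q)>1/2$ forces each $x_j$ to dominate something of size at least $1/2$, and then summing to reach $\masfR(q)\le 1/2$. The paper phrases the summation as $\min x_j>1/2\Rightarrow \sum x_j>3/2$, whereas you use $\masfR(q)\le x_j$ for each $j$ and add; the difference is cosmetic.
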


\par

\begin{proof}
It is obvious that \eqref{lastineq1} implies \eqref{lastineq1}$'$ . Next assume that \eqref{lastineq1}$'$  holds. If $\masfR(q)>1/2$, then $\min x_j>1/2$, which implies that  
$$
\masfR(q)= 2-\sum _{j=0}^2 x _j <2-\frac{3}{2} = \frac{1}{2}.
$$
Since this is a contradition, it follows that $\masfR(q)\le 1/2$ and the inequality \eqref{lastineq1} holds.
\end{proof}

\par

\begin{thm} \label{together}
Let $X\subseteq \rr r$ be open, $s_j,t_j \in \mathbf R$, $ p_j,q_j \in
[1,\infty] $, $j=0,1,2$, and let $\masfR (q)$ be as in \eqref{Rqfunctional} and satisfy \eqref{lastineq1} or \eqref{lastineq1}$'$.
Also assume that \eqref{pjHoldercond}
\begin{align}
%
%
%
%
%
&\begin{aligned}\label{lastineq2}
 0&\leq s_j+s_k,  \quad j,k=0,1,2,  \quad j\neq k, \quad
 \mbox{and} \\[1ex]
0 &\leq s_0+s_1 + s_2-  d \cdot  \masfR (q),
\end{aligned}
\end{align}
hold, with strict inequality in the last inequality in \eqref{lastineq2}
when $\masfR (q)>0$ and $s_j=d\cdot  \masfR (q) $
for some $j=0,1,2$.

 Then the following is true:
\begin{enumerate}
\item the map $(f_1,f_2)\mapsto f_1f_2$ on $C_0^\infty (\rr d)$
extends uniquely to a continuous map from $ \mathscr F L^{q_1} _{s_1}(\rr d)
\times  \mathscr F L^{q_2} _{s_2}(\rr d)$ to $\mathscr F L^{q_0'} _{-s_0}(\rr d)$;

\vrum
\item the map $(f_1,f_2)\mapsto f_1f_2$ on $C_0^\infty (X)$
extends uniquely to a continuous map from $ (\mathscr F L^{q_1} _{s_1})_{loc}(X)
\times  (\mathscr F L^{q_2} _{s_2})_{loc}(X)$ to
$(\mathscr F L^{q_0'} _{-s_0})_{loc}(X)$;

\vrum

\item if $0\le t_0+t_1+t_2$, then the map $(f_1,f_2)\mapsto f_1f_2$ on $C_0^\infty (\rr d)$
extends to a continuous map from $M^{p_1,q_1} _{s_1,t_1}(\rr d)
\times  M^{p_2,q_2} _{s_2,t_2}(\rr d)$ to $M^{p_0',q_0'} _{-s_0,-t_0}(\rr d)$. The extension is
unique when $p_j,q_j<\infty$, $j=1,2$;

\vrum

\item if $t_0\le t_1+t_2$, then the map $(f_1,f_2)\mapsto f_1f_2$ on $C_0^\infty (\rr d)$
extends to a continuous map from $W^{p_1,q_1} _{s_1,t_1}(\rr d)
\times  W^{p_2,q_2} _{s_2,t_2}(\rr d)$ to $W^{p_0',q_0'} _{-s_0,-t_0}(\rr d)$. The extension is
unique when $p_j,q_j<\infty$, $j=1,2$.
\end{enumerate}

\end{thm}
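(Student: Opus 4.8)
The plan is to reduce everything to a single "master" bilinear estimate on the Fourier transform side and then transport it to the local Fourier--Lebesgue setting and to the modulation/Wiener amalgam setting by the standard machinery. First I would observe that, by taking Fourier transforms, the product $f_1 f_2$ corresponds to the convolution $\widehat{f_1} * \widehat{f_2}$, so statement (1) is equivalent to the convolution estimate
\begin{equation*}
\nm{(\eabs{\cdot}^{-s_0} (g_1 * g_2)}{L^{q_0'}} \lesssim \nm{\eabs{\cdot}^{s_1} g_1}{L^{q_1}} \, \nm{\eabs{\cdot}^{s_2} g_2}{L^{q_2}},
\end{equation*}
for $g_j \in \mathscr S$. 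The key device is a dyadic (Paley--Littlewood type) decomposition $g_j = \sum_{k} g_{j,k}$ where $g_{j,k}$ is supported in the shell $|\xi| \sim 2^{k}$; then on the support of $g_{0,k_0} * g_{1,k_1} * \dots$ one has the usual ordering of the three frequencies, and one splits the sum over $(k_0,k_1,k_2)$ according to which of the three is the largest. On each such region two of the weights $\eabs{\cdot}^{s_j}$ can be absorbed into the third using the hypotheses $0 \le s_j + s_k$, and one is left with estimating a weighted $\ell^{q}$-sum of norms of convolutions of the pieces. The convolution of the dyadic pieces is handled by Young's inequality in the exponent that is dictated by $\masfR(q)$, and the loss in the weight is exactly $d \cdot \masfR(q)$; summability of the resulting series is where the condition $\masfR(q)\le 1/2$ (equivalently \eqref{lastineq1}$'$, via Lemma \ref{1/2}) and $0 \le s_0 + s_1 + s_2 - d\cdot \masfR(q)$ enter, with the borderline cases $s_j = d\cdot\masfR(q)$ requiring the strict inequality to get convergence of a logarithmically divergent sum. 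This reproduces, in the special case $q_0=q_1=q_2=2$, the counting in H\"ormander's argument for products of distributions of given Fourier--Lebesgue regularity.

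For statement (2) I would argue that the local spaces $(\mathscr F L^{q}_s)_{loc}(X)$ are defined via multiplication by cut-offs $\chi \in C_0^\infty(X)$, and that multiplication by such a $\chi$ is bounded on $\mathscr F L^q_s(\rr d)$ (since $\widehat\chi$ is rapidly decreasing, convolution with it preserves the weighted $L^q$ norm). Given $f_j \in (\mathscr F L^{q_j}_{s_j})_{loc}(X)$ and a test cut-off $\psi$, pick $\chi$ equal to $1$ on a neighbourhood of $\supp\psi$; then $\psi f_1 f_2 = \psi (\chi f_1)(\chi f_2)$ with $\chi f_j \in \mathscr F L^{q_j}_{s_j}(\rr d)$, and part (1) applies. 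Uniqueness of the extension follows from density of $C_0^\infty$ in the relevant spaces when the indices are finite, together with the fact that the convolution estimate shows the bilinear map is continuous, hence determined by its values on a dense set; in the endpoint cases one instead notes that both factors can be approximated weakly-$*$ and the estimate is stable under such limits in the appropriate topology.

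For statements (3) and (4) the plan is to use the description of modulation and Wiener amalgam spaces as (retracts of) weighted mixed-norm Lebesgue spaces of the short-time Fourier transform, and the fact that, essentially, $M^{p,q}_{s,t}$ behaves like $\mathscr F L^q_s$ in the frequency variable and like $L^p_t$ in the space variable. Concretely, the STFT of a product is controlled by a convolution in the frequency variable (handled exactly as in (1), which supplies the $s$-conditions and the $\masfR(q)$-conditions) and by a product/convolution structure in the space variable, where the H\"older condition \eqref{pjHoldercond} on the $p_j$ makes the spatial part close, and the condition $0\le t_0+t_1+t_2$ (resp.\ $t_0 \le t_1+t_2$ in the Wiener amalgam case, the asymmetry coming from the reversed order of the mixed norm) makes the spatial weights combine correctly. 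I expect the main obstacle to be bookkeeping rather than a genuinely new idea: keeping the two variables, the six indices, and the two weight families simultaneously under control, and in particular verifying that the borderline strict-inequality condition is the only extra constraint needed when some $s_j$ hits $d\cdot\masfR(q)$. The cleanest route is probably to isolate the pure frequency-side convolution estimate as a lemma (stated with the functional $\masfR$ and proved once), and then feed it into each of the four items, so that (2)--(4) become localization, tensorization, and duality arguments of a routine kind.
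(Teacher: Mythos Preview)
Your overall architecture matches the paper's exactly: isolate a single weighted convolution estimate
\[
\nm{\eabs{\cdot}^{-s_0}(g_1*g_2)}{L^{q_0'}}\lesssim \nm{\eabs{\cdot}^{s_1}g_1}{L^{q_1}}\nm{\eabs{\cdot}^{s_2}g_2}{L^{q_2}},
\]
then deduce (1) by Fourier transform, (2) by cut-offs, and (3)--(4) from the STFT identity $V_{\phi_1\phi_2}(f_1f_2)(x,\cdo)=(2\pi)^{-d/2}\big(V_{\phi_1}f_1(x,\cdo)*V_{\phi_2}f_2(x,\cdo)\big)$ together with H\"older in $x$. That reduction, and your treatment of (2)--(4), is essentially what the paper does.

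Where you diverge is in the proof of the master convolution estimate. The paper does \emph{not} use a dyadic Littlewood--Paley splitting of $g_1,g_2$. Instead it writes $\eabs{\xi}^{-s_0}(g_1*g_2)(\xi)=T_F(u_1,u_2)(\xi)$ with $F(\xi,\eta)=\eabs{\xi}^{-s_0}\eabs{\xi-\eta}^{-s_1}\eabs{\eta}^{-s_2}$ and $u_j=\eabs{\cdo}^{s_j}g_j$, then cuts the $(\xi,\eta)$-plane into five continuous regions $\Omega_1,\dots,\Omega_5$ according to which of $\eabs{\eta},\eabs{\xi-\eta},\eabs{\xi}$ is smallest. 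On each region it bounds $F_j=\chi_{\Omega_j}F$ in a mixed $L^{r,\infty}$ or $L^{\infty,r}$ norm (Lemma~\ref{intestimates}) and then invokes a general operator bound for $T_F$ (Proposition~\ref{prop1}). Your three-frequency trichotomy is morally the same decomposition, discretised.

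The one place your sketch is too thin is precisely where the constraint $\masfR(q)\le 1/2$ enters. You say this condition is needed for ``summability of the resulting series'' after Young, but Young plus a H\"older/volume bound on dyadic shells accounts only for the loss $d\cdot\masfR(q)$ and for the condition $\masfR(q)\ge 0$; it does not by itself produce the upper cutoff $1/2$. In the paper this bound arises in the ``high--high'' regions $\Omega_3,\Omega_4,\Omega_5$ through a separate argument: a Cauchy--Schwarz step in the $\xi$-integral gives the estimate at $r=2$ (i.e.\ $\masfR(q)=1/2$), the trivial case $r=\infty$ gives $\masfR(q)=0$, and multilinear interpolation fills in $0\le\masfR(q)\le 1/2$. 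If you pursue the dyadic route you will need an analogous device for the high--high interaction (Cauchy--Schwarz in the shell index, or an $\ell^2$-duality argument), not just Young; otherwise the range of admissible $(q_0,q_1,q_2)$ you obtain will be smaller than claimed. Making that step explicit is the only substantive gap in your proposal.
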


\par

Next we  apply the above result to
estimate the wave-front set of products of functions from different
Fourier-Lebesgue spaces. This is an extension of \cite[Theorem 8.3.3 (iii)]{Hrm-nonlin},
see also \cite[Theorem 4.3]{PTT2}.

\par

\begin{thm}\label{microlocal-functions-some-more}
Let $s_j \in \rr d $, $ q_j \in [1,\infty] $, $j=0,1,2$, and let $\masfR (q)$ in
\eqref{Rqfunctional} be such that
\eqref{lastineq1} and \eqref{lastineq2} hold with strict inequality in the last
inequality in \eqref{lastineq2} when $s_0$, $s_1$ or $s_2 $ or $-s_0 $ is equal to
$ d \cdot  \masfR (q) $.
If $f_j \in \big ( \mathscr F L^{q_j} _{s_j}\big )_{loc} (X)$, $j=1,2$, then
$f_1f_2$ is well-defined as an element in $\mathscr D'(\rr d)$, and
$$
\WF _{\mathscr F L^{q_0'} _{-s_0}} (f_1 f_2) \subseteq
\WF _{\mathscr F L^{q_{1}} _{s_1}} (f_1)\cup \WF _{\mathscr F L^{q_2} _{s_2}} (f_2).
$$
\end{thm}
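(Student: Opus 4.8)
The plan is to deduce the microlocal inclusion from the global/local multiplication property in Theorem \ref{together}(2) by the standard cutting-off argument used by H\"ormander, adapted to the Fourier--Lebesgue scale. Fix a point $(x_0,\xi_0)\notin \WF_{\mathscr F L^{q_1}_{s_1}}(f_1)\cup \WF_{\mathscr F L^{q_2}_{s_2}}(f_2)$; I want to show $(x_0,\xi_0)\notin \WF_{\mathscr F L^{q_0'}_{-s_0}}(f_1f_2)$. By definition of the Fourier--Lebesgue wave-front set, there is a cutoff $\chi\in C_0^\infty(X)$ with $\chi(x_0)\neq 0$ and an open conic neighbourhood $\Gamma$ of $\xi_0$ such that $\widehat{\chi f_j}(\xi)\eabs{\xi}^{s_j}$ is $L^{q_j}$ on $\Gamma$ for $j=1,2$; shrinking, we may take the \emph{same} $\chi$ and $\Gamma$ for both. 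First I would note that $f_1f_2$ is well-defined in $\mathscr D'(X)$: this follows from applying Theorem \ref{together}(2) with the hypotheses of the present theorem, which are exactly \eqref{lastineq1}--\eqref{lastineq2} with the required strictness, so $(f_1,f_2)\mapsto f_1f_2$ makes sense from $(\mathscr F L^{q_1}_{s_1})_{loc}(X)\times (\mathscr F L^{q_2}_{s_2})_{loc}(X)$ into $(\mathscr F L^{q_0'}_{-s_0})_{loc}(X)\subseteq \mathscr D'(X)$.

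The core step is a microlocal decomposition. Choose $\psi\in C^\infty(\rr d\setminus 0)$, homogeneous of degree $0$, with $\psi\equiv 1$ on a slightly smaller cone $\Gamma'\ni \xi_0$ and $\supp\psi\subseteq\Gamma$, and a cutoff splitting $1=\psi(D)+(1-\psi(D))$ near infinity (with a harmless smoothing correction near the origin). Writing $g_j=\chi f_j$, decompose
\begin{equation}\label{microdecomp}
g_1 g_2 = \big(\psi(D)g_1\big)\big(\psi(D)g_2\big)+\big(\psi(D)g_1\big)\big((1-\psi(D))g_2\big)+\big((1-\psi(D))g_1\big)g_2 .
\end{equation}
For the first term, both factors lie \emph{globally} in $\mathscr F L^{q_j}_{s_j}(\rr d)$ by the choice of $\chi$ and $\Gamma$, so Theorem \ref{together}(1) gives $(\psi(D)g_1)(\psi(D)g_2)\in \mathscr F L^{q_0'}_{-s_0}(\rr d)$. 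For the remaining two ``off-diagonal'' terms one factor has spectrum in the complement of $\Gamma'$ and the other factor is, say, only locally in $\mathscr F L^{q}_{s}$, but with a much better behaviour in the cone $\Gamma'$; convolving the two Fourier transforms, the contribution of the product to frequencies inside a still smaller cone $\Gamma''\ni\xi_0$ comes from a region where at least one factor decays like any power of $\eabs{\xi}$, which beats $\eabs{\xi}^{s_j}$. Hence the off-diagonal terms are, microlocally near $(x_0,\xi_0)$, smoothing in the $\mathscr F L$-scale. Testing $g_1g_2$ against a further cutoff $\chi_1$ with $\chi_1\prec\chi$ and $\chi_1(x_0)\neq 0$, and against the conic cutoff to $\Gamma''$, shows $(x_0,\xi_0)\notin\WF_{\mathscr F L^{q_0'}_{-s_0}}(f_1f_2)$.

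The main obstacle is making the off-diagonal estimate in \eqref{microdecomp} rigorous at the level of $L^{q}$ integrability of Fourier transforms along cones, rather than the classical $C^\infty$ case: one must control $\widehat{(1-\psi(D))g_1}\ast \widehat{g_2}$ restricted to $\Gamma''$ and show it decays rapidly, using that $\xi\in\Gamma''$ and $\eta\notin\Gamma'$ force $|\xi-\eta|\gtrsim|\eta|$ (and $\gtrsim|\xi|$), together with the fact that $\widehat{g_1}$ is rapidly decreasing on $\supp(1-\psi)\cup\Gamma'$ only after the cutoff — here one should instead split $(1-\psi(D))g_1$ itself into a piece supported (in frequency) away from $\Gamma'$, which is rapidly decreasing by the wave-front hypothesis, plus a piece that is $O(\eabs{\xi}^{-N})$ near the origin. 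A Peetre-type inequality $\eabs{\xi}^{s}\lesssim \eabs{\xi-\eta}^{|s|}\eabs{\eta}^{s}$ then absorbs the weights, and Young's (or H\"older's) inequality closes the $L^{q_0'}$ bound. The remaining bookkeeping — that the extension of the product furnished by Theorem \ref{together} is consistent with the distributional product here, and that the choice of $\chi,\psi$ can be made uniformly — is routine and follows the pattern of \cite[Theorem 8.3.3]{Hrm-nonlin} and \cite[Theorem 4.3]{PTT2}.
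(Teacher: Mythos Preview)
Your argument contains a genuine gap in the treatment of the off-diagonal terms. You assert that for the mixed pieces in \eqref{microdecomp} ``at least one factor decays like any power of $\eabs{\xi}$'' and that a part of $(1-\psi(D))g_1$ ``is rapidly decreasing by the wave-front hypothesis''. This is false in the Fourier--Lebesgue setting: the assumption $(x_0,\xi_0)\notin\WF_{\mathscr FL^{q_j}_{s_j}}(f_j)$ only gives $\eabs{\cdot}^{s_j}\widehat{\chi f_j}\in L^{q_j}(\Gamma)$ on a cone, never pointwise rapid decay. Outside $\Gamma$ you know nothing beyond the global bound $\eabs{\cdot}^{s_j}\widehat{g_j}\in L^{q_j}(\rr d)$ coming from $f_j\in(\mathscr FL^{q_j}_{s_j})_{loc}$. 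So in the convolution $\widehat{(1-\psi(D))g_1}*\widehat{g_2}$ restricted to $\Gamma''$ neither factor is rapidly decreasing, and a Peetre inequality together with Young/H\"older does not close the $L^{q_0'}$ estimate without further structure. You are importing the $C^\infty$ wave-front argument from \cite[Theorem~8.3.3]{Hrm-nonlin} into a scale where its key mechanism is absent; in particular \cite[Theorem~4.3]{PTT2}, which you cite as a model, does \emph{not} proceed this way.

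The paper's proof avoids Fourier multipliers entirely and works directly on the convolution $\widehat{g_1}*\widehat{g_2}$. It splits the domain of integration into the regions $\Omega_1,\dots,\Omega_5$ of \eqref{theomegasets}: on $\Omega_1,\Omega_2$ one of $\eta$, $\xi-\eta$ is small relative to $\xi$, so $\xi-\eta$ (respectively $\eta$) stays in the good cone $\Gamma$ and the conic $L^{q_j}$ bound together with Lemma~\ref{intestimates}(1)--(2) controls $J_1,J_2$; on $\Omega_3,\Omega_4,\Omega_5$ no conic information is used at all, and the estimate comes from the bilinear bounds for $T_{F_j}$ in Lemma~\ref{estimatesT_F} (ultimately Proposition~\ref{prop1}), which need only the \emph{global} weighted $L^{q_j}$ membership of $\widehat{g_j}$. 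That quantitative decomposition is exactly what replaces the ``rapid decay'' step you are missing. A secondary issue with your approach is that $\psi(D)$ destroys compact support, so the cutoff bookkeeping you call routine would also need repair.
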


\par

\section{The map $T_F(f,g)$} \label{sec2}

\par

In this Section we introduce and study a convenient bilinear map (denoted by 
$T_F $ here below when $F\in L^1_{loc}$ is appropriate).

\par

For $F \in L^1 _{loc} (\rr {2d}) $ and $ p,q \in [1,\infty] $, we
set
\begin{align*}
\nm F{L^{p,q}_1} &\equiv \Big ( \int \Big ( \int |F(\xi ,\eta )|^p\,
d\xi \Big )^{q/p}\, d\eta  \Big )^{1/q}
\intertext{and} \nm
F{L^{p,q}_2} &\equiv \Big ( \int \Big ( \int |F(\xi ,\eta )|^q\,
d\eta \Big )^{p/q}\, d\xi  \Big )^{1/p},
\end{align*}
and we let $L^{p,q}_1(\rr {2d})$ be the set of all $F \in L^1 _{loc} (\rr {2d})$ such that
$\nm F{L^{p,q}_1}$ is finite. The space $L^{p,q}_2$ is defined analogously.  (Cf.
\cite{PTT, PTT2}.) We also let $\Theta$ be defined as
\begin{equation}\label{FGrel}
(\Theta F)(\xi ,\eta ) = F(\xi ,\xi-\eta ),\qquad F\in L^1_{loc}(\rr {2d}).
\end{equation}

\par

If $F\in L^1_{loc}(\rr {2d})$ is fixed, then we are especially concerned about
extensions of the mappings
\begin{alignat}{2}
(F,f,g) &\mapsto & T_F(f,g) &\equiv \int F(\cdo  ,\eta )f(\eta )g(\cdo  -\eta )\, d\eta \label{TFmap}
\intertext{and}
(F,f,g) &\mapsto & T_{\Theta F}(f,g) &\equiv \int F(\cdo  ,\eta )f(\cdo  - \eta )g(\eta )\, d\eta .\label{TGmap}
\end{alignat}
from $ C^\infty _0 (\rr d) \times  C^\infty _0 (\rr d)$
to $  \mathscr S' (\rr d)$.

\par

The following extend \cite[Lemma 8.3.2]{Hrm-nonlin} and \cite[Proposition
3.2]{PTT2}.

\par

\begin{prop}\label{prop1}
Let $F\in L^1_{loc}(\rr {2d})$, $q_j\in [1,\infty
]$, $j=0,1,2$. Also assume that $\masfR (q)$ in \eqref{Rqfunctional} is non-negative,
and let $r=1/\masfR (q)\in (0,\infty ]$.
Then the following is true:
\begin{enumerate}
\item if $\masfR (q) \leq 1/q_0'$, then the mappings \eqref{TFmap}
and \eqref{TGmap} are continuous
from $L^{\infty ,r}_2(\rr {2d})\times L^{q_1}(\rr d)\times
L^{q_2}(\rr d)$ to $L^{q_0}(\rr d)$. Furthermore,
\begin{align}
\nm{T_F(f,g)}{L^{q_0}} &\lesssim
\nm{F}{L^{\infty,r}_2}\nm{f}{L^{q_1}}\nm{g}{L^{q_2}}\label{TFL2est1}
\intertext{and}
\nm{T_{\Theta F}(f,g)}{L^{q_0}} &\lesssim
\nm{F}{L^{\infty,r}_2}\nm{f}{L^{q_1}}\nm{g}{L^{q_2}}.
\label{TFL2est2}
\end{align}

\vrum

\item if in addition $\masfR (q)\le \max (1/2,1/q_1)$,
then the map \eqref{TFmap} is continuous from $L^{r,\infty}_1(\rr
{2d})\times L^{q_1}(\rr d)\times L^{q_2}(\rr d)$ to $L^{q_0}(\rr d)$.
Furthermore,
\[
\nm{T_F(f,g)}{L^{q_0}} \lesssim \nm{F}{L^{r,\infty}_1}
\nm{f}{L^{q_1}}\nm{g}{L^{q_2}}.
\]

\vrum

\item if in addition $\masfR (q)\le \max (1/2,1/q_2)$,
then the map \eqref{TGmap} is continuous from $L^{r,\infty}_1(\rr
{2d})\times L^{q_1}(\rr d)\times L^{q_2}(\rr d)$ to $L^{q_0}(\rr d)$.
Furthermore, 
\[
\nm{T_{\Theta F}(f,g)}{L^{q_0}} \lesssim \nm{F}{L^{r,\infty}_1}
\nm{f}{L^{q_1}}\nm{g}{L^{q_2}}.
\]
\end{enumerate}
\end{prop}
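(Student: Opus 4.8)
The plan is to prove Proposition \ref{prop1} by reducing the three estimates to iterated applications of H\"older's inequality, exactly as in the proof of \cite[Lemma 8.3.2]{Hrm-nonlin}, but keeping careful track of the mixed-norm structure of $F$. First I would observe that it suffices to treat $T_F$; the estimate for $T_{\Theta F}$ then follows by the substitution $\eta \mapsto \cdo - \eta$, which interchanges the roles of $f$ and $g$ and replaces $F(\xi,\eta)$ by $F(\xi,\xi-\eta) = (\Theta F)(\xi,\eta)$, and one checks that this substitution maps $L^{\infty,r}_2$ to itself (for part (1)) and $L^{r,\infty}_1$ to $L^{r,\infty}_1$ with the roles of the two slots swapped (which is why part (2) involves $1/q_1$ and part (3) involves $1/q_2$).

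For part (1), I would fix $x$ and estimate the inner integral $\int F(x,\eta) f(\eta) g(x-\eta)\, d\eta$ by splitting the integrand as a product of three factors whose exponents are dictated by the constraint $\masfR(q) = 2 - 1/q_0 - 1/q_1 - 1/q_2 \ge 0$. Writing $1/q_0 = 1/q_1' + 1/q_2' - \masfR(q)$ rearranged appropriately, one distributes $|F|$, $|f|$, $|g|$ into three H\"older factors; the key point is that the exponent falling on $F$ is precisely $r = 1/\masfR(q)$, so that after taking the inner $\eta$-integral and then the outer $L^{q_0}_x$ norm, and applying H\"older once more in $x$, the $F$-factor collects into $\nm{F}{L^{\infty,r}_2}$ while $f$ and $g$ collect into $\nm f{L^{q_1}}$ and $\nm g{L^{q_2}}$. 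The hypothesis $\masfR(q) \le 1/q_0'$ is exactly what guarantees all the intermediate exponents lie in $[1,\infty]$ so that H\"older is legitimate; I would verify this arithmetic explicitly. The limiting cases $\masfR(q) = 0$ (so $r = \infty$) and $q_j \in \{1,\infty\}$ need to be checked separately but reduce to plain H\"older or to Young's inequality.

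For parts (2) and (3), the difference is that $F$ now lies in $L^{r,\infty}_1$ rather than $L^{\infty,r}_2$, i.e.\ the $L^r$-integrability is in the \emph{first} variable $\xi$ rather than the second; correspondingly the order in which one takes norms changes, and one applies Minkowski's integral inequality to move the outer $L^{q_0}_x$ norm inside before performing the $\eta$-integration. This is what forces the slightly stronger hypothesis $\masfR(q) \le \max(1/2, 1/q_1)$ in part (2): the case distinction in the max corresponds to two different ways of grouping the H\"older factors, one valid when $q_1 \le 2$ and the other (going through a self-dual $L^2$ pairing) valid when $\masfR(q) \le 1/2$. Lemma \ref{1/2} is the tool that lets one rewrite the hypothesis $\masfR(q)\le\max(1/2,1/q_1)$ in the equivalent form that makes the H\"older bookkeeping transparent.

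The main obstacle I anticipate is not any single deep step but the combinatorial care needed to choose the right triple of H\"older exponents in each regime and to confirm they all lie in the admissible range $[1,\infty]$ under the stated hypotheses — in particular handling the boundary values of the $q_j$ and the degenerate case $r=\infty$ uniformly. A secondary technical point is justifying the interchange of integration order (Minkowski/Fubini) for general $F\in L^1_{loc}$; this is handled by first proving the estimates for $F,f,g$ nice (say continuous with compact support), where everything is absolutely convergent, and then extending by density, which is legitimate since the right-hand sides are the relevant norms and $C_0^\infty$ is dense in each space appearing (or, when an endpoint exponent is $\infty$, by a routine weak-$*$ approximation argument).
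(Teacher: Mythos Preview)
Your plan for part (1) is essentially correct and matches the spirit of the paper's argument. The paper is slightly more streamlined: it applies H\"older in $\eta$ once with exponents $(r,r')$ to separate $F$ from $fg$, then notes $r\ge q_0'$ and applies Young's inequality to $|f|^{r'}*|g|^{r'}$, rather than setting up a three-factor H\"older split. Either route works, and your handling of the density/endpoint issues is fine.

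For parts (2) and (3), however, your Minkowski-then-H\"older scheme has a real gap. If you apply Minkowski in $\xi$ you obtain
\[
\nm{T_F(f,g)}{L^{q_0}} \le \int \nm{F(\cdo,\eta)g(\cdo-\eta)}{L^{q_0}_\xi}\,|f(\eta)|\,d\eta,
\]
and a subsequent H\"older in $\xi$ with exponents $(r,q_2)$ would force $1/r+1/q_2=1/q_0$, which is \emph{not} the relation $1/r=\masfR(q)$; moreover the outer $\eta$-integral now carries $|f|$ to the first power, landing you in $L^1$ rather than $L^{q_1}$. One can repair this in the sub-range $\masfR(q)\le 1/q_1$ by pairing with a test function $h$, applying H\"older in $\xi$ with exponents $(r,r')$, and then Young in $\eta$ --- but the Young step requires $q_1'\ge r'$, i.e.\ precisely $\masfR(q)\le 1/q_1$. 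In the complementary regime $1/q_1<\masfR(q)\le 1/2$ no direct H\"older/Young bookkeeping produces the estimate, and your reference to ``a self-dual $L^2$ pairing'' is too vague to cover it.

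The paper handles part (2) by two devices you do not mention. For $\masfR(q)\le 1/q_1$ it uses \emph{duality}: writing $\langle T_F(f,g),h\rangle=\langle T_{F_0}(h,\check g),f\rangle$ with $F_0(\eta,\xi)=F(\xi,\eta)$ swaps the roles of $q_0'$ and $q_1$ and reduces the claim to part (1). For $\masfR(q)\le 1/2$ it proves the two endpoint cases $r=\infty$ (Young) and $r=2$ (Cauchy--Schwarz in $\xi$, weighted arithmetic--geometric mean in $\eta$, then Young) by hand, and then invokes \emph{multilinear interpolation} (Theorems 4.4.1, 5.1.1 and 5.1.2 in \cite{BL}) to obtain the full range $r\in[2,\infty]$. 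These two ideas --- the duality reduction and the interpolation --- are the missing ingredients; Lemma \ref{1/2} is not used in this proposition. Also, your claimed reduction of (3) to (2) via $\eta\mapsto\xi-\eta$ is not quite right: that shear does not preserve the $L^{r,\infty}_1$ norm, so (3) must be argued in parallel to (2) (as the paper does) rather than deduced from it.
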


\par

We note that Proposition \ref{prop1} agrees with \cite[Lemma
8.3.2]{Hrm-nonlin} when $ q_1 = q_2 = 2 $ and with \cite[Proposition
3.2]{PTT2} when $ q_1 = q_2 \in [1,\infty] $.

\par

\begin{proof}
(1) We only prove \eqref{TFL2est1} and leave $\eqref{TFL2est2}$ for the reader.

\par

First, assume that $q_1,q_2<\infty$, and let $f,g\in C_0^\infty (\rr d)$. By
H{\"o}lder's inequality we get
\begin{multline}
\Big(\int |T_F(f,g)(\xi)|^{q_0} \, d \xi\Big)^{1/q_0}
\\[1 ex]
\leq \Big ( \int \Big [ \Big ( \int |F(\xi ,\eta )|^{r} \, d\eta
\Big )^{1/r} \Big ( \int | f(\eta)|^{r'} |g(\xi -\eta
)|^{r'}\, d\eta \Big) ^{1/r'} \Big ] ^{q_0}\, d\xi \Big ) ^{1/q_0}.
\end{multline}
Next we use
$ r \geq q_0'$ and Young's inequality to obtain
\begin{multline}\label{TFcomputations}
\Big(\int |T_F(f,g)(\xi)|^{q_0} \, d \xi\Big)^{1/q_0}
\\[1 ex]
\leq \nm {F}{L^{\infty ,q_0}_2} \Big ( \nm
{|f|^{r'}*|g|^{r'}}{L^{q_0/r'}} \Big )^{1/r'} \leq \nm
{F}{L^{\infty,r}_2} \Big ( \nm {|f|^{r'}}{L^{r_1}} \nm
{|g|^{r'}}{L^{r_2}} \Big )^{1/r'}
\\[1 ex]
= \nm {F}{L^{\infty,r}_2} \nm{f}{L^{q_1}}\nm{g}{L^{q_2}},
\end{multline}
where $ r_1 = q_1/r' $ and $r_2 = q_2/r'$.
The result now follows from the fact that $C^\infty_0$ is dense in
$L^{q_1}$ and $L^{q_2}$ when $q_1, q_2 < \infty$.

\par

Next, assume that $q_1=\infty$ and $q_2<\infty$, and let $f\in
L^\infty$ and $g\in C_0^\infty$. Then, it follows that $T_F(f,g)$ is
well-defined, and that \eqref{TFcomputations} still holds. The
result now follows from the fact that $C_0^\infty$ is dense in
$L^{q_2}$. The case
$q_1<\infty$ and $q_2=\infty$ follows analogously.

\par

Finally, if $q_1=q_2=\infty$, then the assumptions
implies that $r=1$ and $q_0=\infty$. The inequalities
\eqref{TFL2est1} and \eqref{TFL2est2} then follow  by H{\"o}lder's inequality.

\par

(2) First we consider the case $r\ge q_1$. Let $h\in C_0(\rr d)$
when $r <\infty$ and $h\in L^1(\rr d)$ if $r=\infty$. Also let
$F\in L^{r,\infty}_1(\rr {2d})$ and $F_0(\eta,\xi)=F(\xi,\eta)$
and $\check g(\xi)=g(-\xi)$. By \cite[page 354]{PTT2}, we have
$ |\left \langle T_F (f,g) , h \right \rangle | = | \left \langle
T_{F_0}(h,\check g),f \right \rangle | $.
Then (1) implies
\begin{multline*}
|\left \langle T_F (f,g) , h \right \rangle | = | \left \langle
T_{F_0}(h,\check g),f \right \rangle |
\\[1 ex]
\leq \nm {T_{F_0}(h, \check g)}{L^{q_1'}} \nm {f}{L^{q_1}} \leq
\nm{F_0}{L_2^{\infty ,r}} \nm {f}{L^{q_1}} \nm {h}{L^{q'}}
\nm{g}{L^{q_2}}
\\[1 ex]
\leq
\nm{F}{L_1^{r,\infty}} \nm {f}{L^{q_1}} \nm {h}{L^{q'}}
\nm{g}{L^{q_2}}.
\end{multline*}

\par

Next, assume that $r \ge 2$ and $F\in L^{r,\infty}_1(\rr {2d})$.
We will prove the assertion by interpolation.
First we consider the case $r=\infty$. Then
$ \masfR (q) = 0$, and
\begin{multline*}
\Big \Vert \int F(\xi,\eta)f(\eta)g(\xi-\eta)\, d\eta \Big \Vert
_{L^{q_0}} \leq \nm{F}{L^{\infty,\infty}_1} \nm{|f|*|g|}{L^{q_0}}
\\[1 ex]
\leq
\nm{F}{L^{\infty,\infty}_1}\nm{f}{L^{q_1}}\nm{g}{L^{q_2}}.
\end{multline*}

\par

For the case $r=2$ we have $\masfR (q)=1/2$.
By letting
$$
M =\nm {F}{L^{2,\infty}_1}, \quad  \theta = \frac {( \nm {g}{L^{2r_1}}
\nm{h}{L^{2r_2}} )^{1/q_1} }{ \nm {f}{L^{q_1}}^{1/q_1'}},\quad
r_1=q_2/2\quad \text{and} \quad r_2=q_0'/2,
$$
it follows from Cauchy-Schwartz inequality, the weighted
arithmetic-geometric mean-value inequality and Young's inequality that
\begin{multline*}
|\left \langle T_F(f,g),h \right \rangle |
%
%
\leq \int \Big ( \int |F(\xi ,\eta )| |g(\xi -\eta )| |h(\xi )|\,
d\xi \Big ) |f(\eta )| \, d\eta
\\[1 ex]
\le M \int \Big ( \int |g(\xi - \eta) |^2 |h(\xi )|^2 \, d\xi \Big )
^{1/2} |f(\eta )| \, d \eta
\end{multline*}
\\[-4ex]
$$
\leq M \int \Big ( \frac{\theta^{q_1}}{q_1} |f(\eta)|^{q_1} +
\frac{1}{q_1' \theta^{q_1'}} \Big ( \int |g(\xi -\eta )|^2
|h(\xi )|^2 \, d\xi \Big ) ^{q_1'/2} \Big ) \, d\eta
$$
\begin{multline*}
=  M \Big ( \frac{\theta^{q_1}}{q_1} \nm{f}{L^{q_1}}^{q_1} +
\frac{1}{q_1' \theta^{q_1'}} \nm {|g|^2 * |h| ^2}
{L^{q_1'/2}}^{q_1'/2} \Big )
\\[1 ex]
\leq M \Big ( \frac {\theta ^{q_1}}{q_1} \nm {f}{L^{q_1}}^{q_1} +
\frac{1}{q_1' \theta ^{q_1'}} ( \nm {|g|^2}{L^{r_1}} \nm
{|h|^2}{L^{r_2}})^{q_1'/2} \Big )
\\[1 ex]
= M \Big ( \frac { \theta^{q_1}}{q_1} \nm {f}{L^{q_1}}^{q_1} +
\frac {1}{q_1' \theta ^{q_1'}} (\nm {g} {L^{2r_1} } \nm
{h}{L^{2r_2}}) ^{q_1'} \Big)
\\[1 ex]
=  M \Big (\frac {1}{q_1}+\frac {1}{q_1'} \Big ) \nm {f}{L^{q_1}}\nm
{g}{L^{q_2}} \nm {h}{L^{q_0'}}
\\[1 ex]
=  M \nm {f}{L^{q_1}} \nm {g}{L^{q_2}} \nm{h}{L^{q_0'}}.
\end{multline*}
This gives the result for $r=2$.

\par

Since we also have proved the result for $r=\infty$. The
assertion (2) now follows for general $r\in [2,\infty ]$ by
multi-linear interpolation, using Theorems 4.4.1, 5.1.1 and 5.1.2 in \cite{BL}.
%

\par

The assertion (3) follows by similar arguments as in the proof of
(2). The details are left for the reader. The proof is complete.

\end{proof}

\section{Proof of Theorems \ref{together} and
\ref{microlocal-functions-some-more}} \label{sec3}

\par

Before the proof of Theorem \ref{together}, we need some preparation,
and formulate auxiliary results in three Lemmas.

\par

First, we recall \cite[Lemma 3.5]{PTT2} which concerns
different integrals of the function
\begin{equation}\label{Fdef}
F(\xi ,\eta ) = \eabs \xi ^{s_0}\eabs {\xi -\eta}^{-s_1}\eabs \eta
^{-s_2}, \;\;\; \xi ,\eta \in \rr d,
\end{equation}
where $s_j\in \mathbf R$, $j=0,1,2$. These integrals, with respect to $\xi$ or $\eta$, are taken
over the sets
\begin{equation}\label{theomegasets}
\begin{aligned}
\Omega _1 &= \sets{ (\xi, \eta) \in \rr {2d}}{\eabs \eta < \delta
\eabs \xi },
\\[1ex]
\Omega _2 &= \sets{ (\xi, \eta) \in \rr {2d}}{\eabs {\xi -\eta }<
\delta \eabs \xi },
\\[1ex]
\Omega _3 &= \sets{ (\xi, \eta) \in \rr {2d}}{\delta \eabs \xi \le
\min (\eabs \eta ,\eabs {\xi -\eta }),\ |\xi |\le R},
\\[1ex]
\Omega _4 &= \sets{ (\xi, \eta) \in \rr {2d}}{\delta \eabs \xi
  \le \eabs {\xi -\eta }\le  \eabs \eta,\ |\xi |> R},
\\[1ex]
\Omega _5 &= \sets{ (\xi, \eta) \in \rr {2d}}{\delta \eabs \xi \le
  \eabs \eta \le \eabs {\xi -\eta },\ |\xi |> R},
\end{aligned}
\end{equation}
for some positive constants $\delta$ and $R$.
By $ \chi _{\Omega _j} $ we denote the characteristic function of the set $ \Omega_j,$ $ j = 1,\dots, 5.$

\par

\begin{lemma}\label{intestimates}
Let $F$ be given by \eqref{Fdef} and let $\Omega _1,\dots,\Omega _5$
be given by \eqref{theomegasets}, for some
constants $0<\delta <1$ and $R\ge 4/\delta$. Also let $p\in [1,\infty ]$
and $ F_j=\chi _{\Omega _j}F$, $ j = 1,\dots, 5.$ Then 
the following is true:
\begin{enumerate}
\item
$$
\nm {F_1(\xi ,\cdo  )}{L^p}\lesssim
\begin{cases}
\eabs \xi ^{s_0-s_1}\big ( 1+\eabs \xi
^{-s_2+d/p} \big ),& s_2\neq d/p ,
\\[1ex]
\eabs \xi ^{s_0-s_1}\big ( 1+\log
\eabs \xi  \big )^{1/p},& s_2= d/p \text;
\end{cases}
$$

\vrum

\item
$$
\nm {F_2(\xi ,\cdo  )}{L^p}\lesssim
\begin{cases}
\eabs \xi ^{s_0-s_2}\big ( 1+\eabs \xi
^{-s_1+d/p} \big ),& s_1\neq d/p ,
\\[1ex]
\eabs \xi ^{s_0-s_2}\big ( 1+\log
\eabs \xi  \big )^{1/p},& s_1= d/p \text;
\end{cases}
$$

\vrum

\item $\nm {F_3(\cdo ,\eta )}{L^p}\lesssim \eabs \eta ^{-s_1-s_2}$;

\vrum

\item if $j=4$ or $j=5$, then
$$
\nm {F_j(\cdo ,\eta )}{L^p}\lesssim
\begin{cases}
\eabs \eta ^{s_0-s_1-s_2+d/p},& s_0 > -d/p ,
\\[1ex]
\eabs \eta ^{-s_1-s_2}\big ( 1+\log
\eabs \eta  \big )^{1/p},& s_0 = -d/p,
\\[1ex]
\eabs \eta ^{-s_1-s_2},& s_0 < -d/p .
\end{cases}
$$
\end{enumerate}
\end{lemma}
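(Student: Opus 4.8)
The statement to prove is Lemma~\ref{intestimates}, which collects $L^p$-estimates (in one variable, with the other fixed) for the function
$$
F(\xi ,\eta ) = \eabs \xi ^{s_0}\eabs {\xi -\eta}^{-s_1}\eabs \eta ^{-s_2}
$$
restricted to the five regions $\Omega_1,\dots,\Omega_5$. Since the result is quoted as \cite[Lemma 3.5]{PTT2}, the plan is to recall its proof, which proceeds region by region. The unifying principle is that on each $\Omega_j$ two of the three factors $\eabs\xi,\eabs{\xi-\eta},\eabs\eta$ are comparable, so one reduces each integral to a single scalar integral of the type $\int \eabs{\zeta}^{-a}\,d\zeta$ over a ball or its complement, whose convergence is governed by the threshold $a\gtrless d$ (equivalently $s_\bullet \gtrless d/p$ after raising to the $p$-th power), which is exactly where the logarithmic borderline cases in (1), (2), (4) come from.

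\textbf{Key steps, region by region.}
First I would treat $\Omega_1=\{\eabs\eta<\delta\eabs\xi\}$: here $\eabs{\xi-\eta}\sim\eabs\xi$, so $F_1(\xi,\cdot)\sim \eabs\xi^{s_0-s_1}\eabs\eta^{-s_2}\chi_{\{\eabs\eta<\delta\eabs\xi\}}$, and $\nm{F_1(\xi,\cdot)}{L^p}^p\sim \eabs\xi^{(s_0-s_1)p}\int_{\eabs\eta<\delta\eabs\xi}\eabs\eta^{-s_2 p}\,d\eta$; splitting the integral over $\eabs\eta<1$ (which contributes $O(1)$ precisely when $s_2 p<d$, the divergent part being absorbed elsewhere) and over $1\le\eabs\eta<\delta\eabs\xi$ gives $O(1+\eabs\xi^{-s_2 p+d})$ when $s_2\neq d/p$ and $O(1+\log\eabs\xi)$ when $s_2=d/p$, yielding (1) after taking $p$-th roots. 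Part (2) is the mirror image under the substitution $\eta\mapsto\xi-\eta$, which swaps the roles of $\eabs\eta$ and $\eabs{\xi-\eta}$ and hence of $s_1$ and $s_2$. For $\Omega_3$, where $\delta\eabs\xi\le\min(\eabs\eta,\eabs{\xi-\eta})$ and $|\xi|\le R$, the constraint $|\xi|\le R$ forces $\eabs\xi$ bounded, so $\eabs\xi^{s_0}\sim 1$; moreover on $\Omega_3$ one has $\eabs\eta\sim\eabs{\xi-\eta}$ (both being $\gtrsim\eabs\xi$ while $|\xi|$ is bounded, with $R\ge 4/\delta$ ensuring the comparison), so the $\xi$-integral of $\eabs{\xi-\eta}^{-s_1 p}\eabs\eta^{-s_2 p}$ over $|\xi|\le R$ is $\lesssim \eabs\eta^{-(s_1+s_2)p}$, giving (3). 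Finally, $\Omega_4$ and $\Omega_5$ both live in $|\xi|>R$ with $\eabs{\xi-\eta}$ and $\eabs\eta$ comparable (the two cases just order them); there $F_j(\cdot,\eta)\sim\eabs\eta^{-s_1-s_2}\eabs\xi^{s_0}\chi_{\{\delta\eabs\xi\le\eabs\eta,\ |\xi|>R\}}$, so $\nm{F_j(\cdot,\eta)}{L^p}^p\sim\eabs\eta^{-(s_1+s_2)p}\int_{R<|\xi|,\ \eabs\xi\lesssim\eabs\eta}\eabs\xi^{s_0 p}\,d\xi$, and the three-way split in (4) according to $s_0\gtrless -d/p$ (growth $\eabs\eta^{s_0 p+d}$, the log at equality, or $O(1)$) follows exactly as before. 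In every case the passage from $p<\infty$ to $p=\infty$ is the obvious limiting reading ($\nm{\,\cdot\,}{L^\infty}$ of the same majorants), which is why the stated bounds are uniform in $p$.

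\textbf{Main obstacle.}
The calculus is elementary; the delicate point is the \emph{bookkeeping of the region geometry} — making sure that on each $\Omega_j$ the claimed comparabilities among $\eabs\xi$, $\eabs{\xi-\eta}$, $\eabs\eta$ really hold with the given $\delta<1$ and $R\ge 4/\delta$, and that the constants are genuinely independent of $\xi$ (resp.\ $\eta$). In particular, in $\Omega_3$ one must use $|\xi|\le R$ together with $\delta\eabs\xi\le\eabs\eta$ to get $\eabs\eta\sim\eabs{\xi-\eta}$ \emph{uniformly}, and in $\Omega_4,\Omega_5$ the condition $|\xi|>R$ is what lets one ignore the possible singularity of $\eabs\xi^{s_0}$ near the origin and reduce to a clean power integral over an annular/exterior region. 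Once these comparabilities are pinned down, each estimate reduces to the one scalar fact $\int_{|\zeta|<\rho}\eabs\zeta^{-a}\,d\zeta \sim 1+\rho^{d-a}$ for $a\neq d$ and $\sim 1+\log\rho$ for $a=d$ (and its exterior analogue), and the lemma follows.
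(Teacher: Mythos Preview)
Your sketch is correct and is precisely the region-by-region argument of \cite[Lemma 3.5]{PTT2}; the present paper does not reprove the lemma but simply refers to that source, so there is nothing further to compare. One minor slip: in your treatment of $\Omega_1$ the set $\{\eabs\eta<1\}$ is empty (since $\eabs\eta\ge 1$ by definition), so the near-origin contribution is $O(1)$ unconditionally and there is no ``divergent part'' to absorb---the case split $s_2\gtrless d/p$ arises solely from the behaviour of $\int_{1\le |\eta|\lesssim \eabs\xi}\eabs\eta^{-s_2 p}\,d\eta$.
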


\par

We refer to \cite{PTT2} for the proof of Lemma \ref{intestimates}.

\par

Next we estimate each of the auxiliary
functions $ T_{F_j}, $
defined by \eqref{TFmap} with $F$ replaced by $ F_j $, $ j = 1,\dots, 5$.

\par

\begin{lemma} \label{estimatesTFj}
Let $\masfR (q)$ and $F$ be given by \eqref{Rqfunctional} and \eqref{Fdef}, and let
$\Omega _1,\dots,\Omega _5$ be given by \eqref{theomegasets}, for some
constants $0<\delta <1$ and $R\ge 4/\delta$.
Moreover, let
$F_j=\chi _{\Omega _j}F$, $ j = 1,\dots, 5$, and
$u_j = \eabs \cdo ^{s_j} v_j$, $ j=1,2$.
Then the estimate
\begin{equation*}
\nm {T_ {F_j} (u_1, u_2)}{L^{q_0'}}
\lesssim  \nm {v_1}{L ^ {q_1} _{s_1}}  \nm {v_2}{L ^ {q_2} _{s_2}}
\end{equation*}
holds when:
\begin{enumerate}
\item $j=1,2$, for $\masfR (q)\le 1/q_0$, $s_0\leq s_1$, $s_0\leq s_2$ and
$$
s_0  \leq s_1 + s_2 - d \cdot \masfR (q),
$$
where the above inequality is strict when $ s_1 = d\cdot \masfR (q)$
or $  s_2 = d \cdot \masfR (q)$.

\vrum

\item $j=3$, for
$$
\begin{cases}
\masfR (q) \le \min(1/q_1,1/q_2) & \text {when} \quad  q_1, q_2 < 2 ,
\\[1 ex]
\masfR (q) \le 1/2_{\phantom 2}
& \text {when}  \quad q_1\geq 2 \quad \text{or} \quad q_2\geq 2,
\end{cases}
$$
and
$$
0 \leq s_1 + s_2\text ;
$$

\vrum

\item $j=4$ for
$
\masfR (q) \le \max(1/q_2,1/2), 
%
$
$$
0 \leq s_1 + s_2
\quad
\text{and}
\quad
s_0  \leq s_1 + s_2 - d\cdot \masfR (q),
$$
with $ 0< s_1 + s_2 $ when
$ s_0 =  - d \cdot \masfR (q)$;

\vrum

\item $j=5$, for
$
\masfR (q) \le \max(1/q_1,1/2), 
$
$$
0 \leq s_1 + s_2
\quad
\text{and}
\quad
s_0  \leq s_1 + s_2 - d\cdot \masfR (q),
$$
with $ 0< s_1 + s_2 $ when $ s_0 =  - d \cdot \masfR (q)$.
\end{enumerate}
\end{lemma}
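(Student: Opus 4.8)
\textbf{Proof proposal for Lemma \ref{estimatesTFj}.}
The plan is to reduce each of the four cases to an application of Proposition \ref{prop1} (with a suitably rescaled triple of exponents), combined with the pointwise integral estimates for the functions $F_j=\chi_{\Omega_j}F$ supplied by Lemma \ref{intestimates}. The key observation is that
$$
\nm{T_{F_j}(u_1,u_2)}{L^{q_0'}}
=\nm{T_{F_j}(\eabs\cdo^{s_1}v_1,\eabs\cdo^{s_2}v_2)}{L^{q_0'}},
$$
and that the kernel $F_j(\xi,\eta)\eabs{\xi-\eta}^{s_1}\eabs\eta^{s_2}$ equals $\chi_{\Omega_j}(\xi,\eta)\eabs\xi^{s_0}$; hence, writing the output index as $q_0'$ in Proposition \ref{prop1} (so that the role of the triple there is played by $(q_0',q_1,q_2)$ here, with the corresponding $\masfR$-functional unchanged since $1/q_0+1/q_0'=1$), the problem becomes: estimate $\nm{G_j}{L^{\infty,r}_2}$ or $\nm{G_j}{L^{r,\infty}_1}$, where $G_j(\xi,\eta)=\chi_{\Omega_j}(\xi,\eta)\eabs\xi^{s_0}\eabs{\xi-\eta}^{-s_1}\eabs\eta^{-s_2}=F_j(\xi,\eta)$ with the sign conventions of \eqref{Fdef}. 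In other words, the weighted estimate for $T_{F_j}$ follows from an \emph{unweighted} boundedness statement for $T_{F_j}$ acting on $L^{q_1}\times L^{q_2}$, which is exactly the content of Proposition \ref{prop1} once we know the relevant mixed-norm of $F_j$ is finite.

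Concretely, I would proceed case by case. For $j=3$ (item (2)): by Lemma \ref{intestimates}(3) we have $\nm{F_3(\cdo,\eta)}{L^p}\lesssim\eabs\eta^{-s_1-s_2}$ for every $p$, so taking $p=r=1/\masfR(q)$ and using $0\le s_1+s_2$ gives $F_3\in L^{r,\infty}_1$ with $\nm{F_3}{L^{r,\infty}_1}\lesssim 1$; then Proposition \ref{prop1}(2) (the hypothesis $\masfR(q)\le\max(1/2,1/q_1)$, and symmetrically (3) with $1/q_2$, matches the stated dichotomy on $q_1,q_2$ via Lemma \ref{1/2} applied to the triple $(q_0',q_1,q_2)$) yields the bound. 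For $j=4$ (item (3)): by Lemma \ref{intestimates}(4), $\nm{F_4(\cdo,\eta)}{L^r}$ is bounded by $\eabs\eta^{s_0-s_1-s_2+d/r}=\eabs\eta^{s_0-s_1-s_2+d\masfR(q)}$ when $s_0>-d/r$, which is $\lesssim 1$ precisely under $s_0\le s_1+s_2-d\masfR(q)$; the borderline $s_0=-d/r$ produces a logarithmic factor that is absorbed once one imposes the strict inequality $0<s_1+s_2$, and $s_0<-d/r$ gives decay directly. Thus $F_4\in L^{r,\infty}_1$, and Proposition \ref{prop1}(3) (with the condition $\masfR(q)\le\max(1/2,1/q_2)$) finishes it; case $j=5$ is the mirror image using Proposition \ref{prop1}(2). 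For $j=1,2$ (item (1)): here Lemma \ref{intestimates}(1)--(2) gives an estimate on $\nm{F_j(\xi,\cdo)}{L^r}$ (a norm in $\eta$ with $\xi$ frozen), namely $\lesssim\eabs\xi^{s_0-s_1}(1+\eabs\xi^{-s_2+d/r})$ for $j=1$, which is $\lesssim 1$ under $s_0\le s_1$ together with $s_0\le s_1+s_2-d\masfR(q)$, with the borderline $s_2=d\masfR(q)$ handled by the strict-inequality hypothesis; hence $F_j\in L^{\infty,r}_2$ and Proposition \ref{prop1}(1) (whose hypothesis $\masfR(q)\le 1/q_0'$, i.e. the condition on the output index of that proposition, here reads $\masfR(q)\le 1/q_0$) applies.

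The routine-but-delicate part is bookkeeping the borderline (logarithmic) cases: when one of $s_1,s_2$ (for $j=1,2$) or $-s_0$ (for $j=4,5$) equals $d\masfR(q)$, the pointwise estimate in Lemma \ref{intestimates} carries a factor $(1+\log\eabs\cdot)^{1/r}$, and one must check that the remaining strict inequality among the $s_j$ (either $s_0<s_1+s_2-d\masfR(q)$ or $0<s_1+s_2$, as the statement prescribes) produces a genuine power of $\eabs\cdot$ that dominates this logarithm, so that the mixed norm of $F_j$ is still finite. The other point requiring care is the verification that the hypotheses on $\masfR(q)$ in the present lemma translate correctly into the hypotheses of Proposition \ref{prop1} after the substitution $q_0\leftrightarrow q_0'$; this is where Lemma \ref{1/2} is used, to pass freely between the condition $\masfR(q)\le 1/2$ and the sharper $\masfR(q)\le\max(1/2,\min_j 1/q_j)$. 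I expect these two issues — the logarithmic endpoints and the index translation — to be the only real obstacles; the structural reduction to Proposition \ref{prop1} is otherwise immediate.
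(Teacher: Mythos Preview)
Your approach is essentially identical to the paper's: set $r=1/\masfR(q)$, use Lemma \ref{intestimates} to show $F_j\in L^{\infty,r}_2$ for $j=1,2$ and $F_j\in L^{r,\infty}_1$ for $j=3,4,5$ under the stated conditions on the $s_j$, and then invoke the matching part of Proposition \ref{prop1} (part (1) for $j=1,2$; parts (2)--(3) for $j=3$; part (3) for $j=4$; part (2) for $j=5$), with the logarithmic borderline cases absorbed exactly as you describe. One caveat: your parenthetical that the $\masfR$-functional is ``unchanged'' under $q_0\leftrightarrow q_0'$ is not literally true, but the paper sidesteps any relabelling by simply working with the original $r=1/\masfR(q)$ throughout and checking the hypotheses of Proposition \ref{prop1} directly---which is what your subsequent condition translations actually do---and it does not invoke Lemma \ref{1/2} here.
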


\par

\begin{proof}
Let $r=1/\masfR (q)$.

\par

(1) The condition $\masfR (q)\le 1/q_0$ implies that $ r \geq q_0'$.
%
By Lemma \ref{intestimates} (1) it follows that
\begin{equation}\label{Fjest1}
\nm {F_1}{L^{\infty ,r}_2}<\infty
\end{equation}
when $s_0\leq s_1$ and 
$$
\begin{cases}
s _0\leq s_1 + s_2 - d/r, \quad &\text {for} \quad  s_2 \neq d/ r
\\[1 ex]
s _0< s_1,  \quad &\text {for} \quad  s_2 = d/ r.
\end{cases}
$$
Similarly, by Lemma \ref{intestimates} (2) it follows that
\begin{equation}\label{Fjest2}
\nm {F_2}{L^{\infty ,r}_2}<\infty
\end{equation}
when $s_0\leq s_2$ and 
$$
\begin{cases}
s_0 \leq s_1 + s_2 - d/r, \quad &\text {for} \quad  s_1 \neq d/ r
\\[1 ex]
s_0 < s_2, \quad &\text {for} \quad  s_1 = d / r.
\end{cases}
$$
This, together with Proposition \ref{prop1} (1) gives
$$
\nm {T_ {F_j} (u_1, u_2)}{L^{q_0'}}
\lesssim
 \nm {v_1}{L ^ {q_1} _{s_1}}  \nm {v_2}{L ^ {q_2} _{s_2}}, \quad j = 1,2.
$$

\par

(2) By Lemma \ref{intestimates} (3) we have
\begin{equation}\label{Fjest3}
\nm {F_3}{L^{r_0,\infty }_1}<\infty ,
\end{equation}
when  $s_1 + s_2\geq 0$ and $ r_0 \in [1,\infty]$.
In particular, if $r_0=r=1/\masfR(q)$ and $r \geq \min (2, \max (q_1, q_2))$, then
it follows from Proposition \ref{prop1} (2) and (3) that
\begin{equation*}
\nm {T_ {F_3} (u_1, u_2)}{L^{r}}
\leq
C \nm {v_1}{L ^ {q_1} _{s_1}}\nm {v_2}{L ^ {q_2} _{s_2}}.
\end{equation*}
This gives (2).

\par

Next consider $T_{F_4}$ and $T_{F_5}$.
By Lemma \ref{intestimates} (4) it follows that
\begin{equation}\label{Fjest4-5}
\nm {F_4}{L^{r,\infty}_1}<\infty \quad \mbox{and} \quad
\nm {F_5}{L^{r,\infty}_1}<\infty
\end{equation}
when
$$
\begin{cases}
s_0-s_1-s_2 + d / r \leq 0,
\quad & s_0 > -d / r
\\[1 ex]
s_1 + s_2 > 0, \quad & s_0 = - d / r
\\[1 ex]
s_1 + s_2 \geq 0, \quad & s _0< - d / r.
\end{cases}
$$
If $s_0 > -d / r$ and  $s_0-s_1-s_2 + d / r \leq 0$, then $ s_1 + s_2 > 0$.
Therefore \eqref{Fjest4-5} holds when
$$
0 \leq s_1 + s_2
$$
and
$$
s_0  \leq s_1 + s_2 - d / r,
$$
with $ 0< s_1 + s_2 $ when $ s_0 =  - d / r $.
Hence Proposition \ref{prop1} (3) gives
\begin{equation*}
\nm {T_ {F_4} (u_1, u_2)}{L^{q_0'}} \lesssim \nm {v_1}{L ^ {q_1} _{s_1}}\nm {v_2}{L ^ {q_2} _{s_2}}
\end{equation*}
for $r \geq \min (2, q_2)$, and (3) follows.
%

\par

Finally, by Proposition \ref{prop1} (2)
we get that
\begin{equation*}
\nm {T_ {F_5} (u_1, u_2)}{L^{q_0}} \lesssim \nm {v_1}{L ^ {q_1} _{s_1}}
\nm {v_2}{L ^ {q_2} _{s_2}}
\end{equation*}
when
$r \geq \min (2, q_1)$. This gives (4), and the proof is complete.
%
%
%
\end{proof}

\par

In the following lemma we give another view to Lemma \ref{estimatesTFj},
which will be used for the proof of Theorem \ref{together}.

\par

\begin{lemma} \label{estimatesT_F}
Let $F$, $F_j$ and $u_j$ be the same as in Lemma \ref{estimatesTFj}. Furthermore,
assume that \eqref{lastineq1} and \eqref{lastineq2}
hold, with strict inequality in the last inequality in \eqref{lastineq2}
when $ s_1, s_2 $ or $-s_0$ is equal to $ d \cdot \masfR (q)$. Then
\begin{equation*}
\nm {T_ {F_j} (u_1, u_2)}{L^{q_0}}
\lesssim  \nm {v_1}{L ^ {q_1} _{s_1}}\nm {v_2}{L ^ {q_2} _{s_2}}
\end{equation*}
holds for every $j \in \{1,\dots, 5 \}$.

\par

Furthermore, if the conditions in \eqref{lastineq1} and \eqref{lastineq2} are violated,
then at least one of the relations in {\rm{(1)-(5)}}
in Lemma \ref{estimatesTFj} is violated.
\end{lemma}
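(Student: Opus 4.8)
The plan is to deduce both assertions from Lemma \ref{estimatesTFj} by a careful case analysis, organized according to which of the five regions $\Omega_j$ is under consideration and according to the relative sizes of $q_1,q_2$ with respect to $2$. For the first assertion I would fix $j\in\{1,\dots,5\}$ and verify that the hypotheses \eqref{lastineq1} and \eqref{lastineq2} (together with the strictness convention) imply the hypotheses of the corresponding item of Lemma \ref{estimatesTFj}; the desired estimate then follows immediately, noting that $L^{q_0}$ and $L^{q_0'}$ are interchanged appropriately by the duality already used in Proposition \ref{prop1} and in Lemma \ref{estimatesTFj}. Concretely, for $j=1,2$ the condition $\masfR(q)\le 1/q_0$ is precisely the left half of \eqref{lastineq1} combined with $x_0=1/q_0\ge 0$ rewritten via $\masfR(q)=2-\sum x_j$; the inequalities $s_0\le s_1$, $s_0\le s_2$, and $s_0\le s_1+s_2-d\masfR(q)$ follow by adding the pairwise bounds $0\le s_j+s_k$ from \eqref{lastineq2} to the bound $0\le s_0+s_1+s_2-d\masfR(q)$ and using that $\masfR(q)\ge 0$; the strictness transfers because the excluded cases $s_1=d\masfR(q)$ or $s_2=d\masfR(q)$ are among those where \eqref{lastineq2} is assumed strict. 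For $j=3$ one only needs $0\le s_1+s_2$, which is immediate, plus the dichotomy $\masfR(q)\le\min(1/q_1,1/q_2)$ when $q_1,q_2<2$ and $\masfR(q)\le 1/2$ otherwise; this is exactly the content of Lemma \ref{1/2}, i.e. the equivalence of \eqref{lastineq1} and \eqref{lastineq1}$'$, applied with the roles of $q_1,q_2$ kept and $q_0$ free. For $j=4,5$ one again combines the pairwise $s$-inequalities to get $0\le s_1+s_2$ and $s_0\le s_1+s_2-d\masfR(q)$, with strictness when $-s_0=d\masfR(q)$ covered by the strictness convention, while the bound on $\masfR(q)$ is once more furnished by Lemma \ref{1/2}.

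For the second assertion — the converse direction — I would argue by contraposition: assume \eqref{lastineq1} or \eqref{lastineq2} fails, and produce an index $j$ for which the corresponding hypothesis in Lemma \ref{estimatesTFj} (1)--(5) fails. If the $s$-part of \eqref{lastineq2} fails, say $s_0+s_1+s_2-d\masfR(q)<0$ (or the non-strict version with equality in a forbidden case), then the inequality $s_0\le s_1+s_2-d\masfR(q)$ required for $j=1,2,4,5$ is violated once one knows $s_0\ge$ the relevant quantity; if instead a pairwise sum $s_j+s_k$ is negative, then depending on which pair it is, one of $s_0\le s_1$, $s_0\le s_2$ (for $j=1,2$) or $0\le s_1+s_2$ (for $j=3,4,5$) fails. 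If the $\masfR(q)$-part, i.e. \eqref{lastineq1}, fails, then either $\masfR(q)<0$, which contradicts the standing hypothesis $\masfR(q)\ge 0$ needed everywhere, or $\masfR(q)>1/2$; in the latter case Lemma \ref{1/2} shows $\min_j 1/q_j>1/2$, so all $q_j<2$, and then for $j=3$ the requirement $\masfR(q)\le\min(1/q_1,1/q_2)$ fails since $\masfR(q)>1/2>\min(1/q_1,1/q_2)$ is impossible — rather one checks directly that $\masfR(q)>\min(1/q_1,1/q_2)$ because $\masfR(q)=2-\sum x_j$ and $\min x_j>1/2$ forces $\masfR(q)<1/2<\min x_j$, contradiction, so in fact this branch cannot occur, leaving only the $s$-failures and $\masfR(q)<0$ to handle. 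I would present this bookkeeping as a short table-free enumeration of the finitely many failure modes.

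The main obstacle I anticipate is purely combinatorial rather than analytical: keeping track of which of the several numbered conditions in Lemma \ref{estimatesTFj} governs which region, and ensuring that the strictness exceptions ($s_j=d\masfR(q)$ versus $-s_0=d\masfR(q)$) line up correctly between the hypothesis of the present lemma and the hypotheses of the pieces. In particular the index shift between $L^{q_0}$ and $L^{q_0'}$ in the statements of Lemma \ref{estimatesTFj}(1) versus (3) must be handled consistently, and one must make sure the dichotomy on the sizes of $q_1,q_2$ is invoked in the correct direction when passing through Lemma \ref{1/2}. None of these steps requires a new estimate; the entire proof is a verification that the global hypotheses \eqref{lastineq1}--\eqref{lastineq2} are, region by region, equivalent to the conjunction of the local hypotheses in Lemma \ref{estimatesTFj}, which is precisely why that lemma was phrased the way it was.
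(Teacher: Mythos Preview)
The paper does not supply a proof of Lemma \ref{estimatesT_F}; it presents the lemma as ``another view'' of Lemma \ref{estimatesTFj} and passes directly to Proposition \ref{proptogether}. Your overall plan---verify, region by region, that the global hypotheses \eqref{lastineq1}--\eqref{lastineq2} imply the local ones listed in Lemma \ref{estimatesTFj}, and argue the converse by contraposition---is exactly the intended route, and the bookkeeping on the $s$-parameters is essentially right once one keeps track of the sign flip $s_0\leftrightarrow -s_0$ between the conventions of Lemma \ref{estimatesTFj} and of \eqref{lastineq2} (which you allude to but never state explicitly).

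There is, however, a genuine gap in your verification for $j=1,2$. You claim that the condition $\masfR(q)\le 1/q_0$ required by Lemma \ref{estimatesTFj}(1) ``is precisely the left half of \eqref{lastineq1} combined with $x_0=1/q_0\ge 0$ rewritten via $\masfR(q)=2-\sum x_j$''. This is false: take $q_0=10$, $q_1=5/4$, $q_2=10/9$, so that $\masfR(q)=2-1/10-4/5-9/10=1/5\le 1/2$, yet $\masfR(q)=1/5>1/10=1/q_0$. Hence \eqref{lastineq1} alone does \emph{not} guarantee the hypothesis of Lemma \ref{estimatesTFj}(1), and your direct case analysis breaks here. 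Observe how the paper actually deals with this: in the proof of Proposition \ref{proptogether} it does not invoke Lemma \ref{estimatesT_F} under the bare hypothesis \eqref{lastineq1}, but first assumes the stronger, asymmetric condition \eqref{lastineq1}$''$, which contains $\masfR(q)\le 1/q_0$ explicitly, and then recovers the remaining cases by interchanging the roles of $q_0,q_1,q_2$ via duality. Your proof needs either this reduction step or an independent argument to cover the regime $1/q_0<\masfR(q)\le 1/2$.

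Your treatment of the converse when \eqref{lastineq1} fails is also circular: you assert that $\masfR(q)>1/2$ forces $\min_j 1/q_j>1/2$, derive a contradiction, and then conclude ``this branch cannot occur''---but the point of the converse is precisely to say which item of Lemma \ref{estimatesTFj} fails when $\masfR(q)>1/2$, not to argue that this case is impossible. The correct observation (essentially the content of Lemma \ref{1/2}) is that $\masfR(q)>1/2$ implies $\masfR(q)>\min(1/q_0,1/q_1,1/q_2)$, so at least one of the $\masfR(q)$-constraints in items (1)--(5) of Lemma \ref{estimatesTFj} is violated; you should say which one, depending on which $q_j$ realizes the minimum.
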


\par

We have now the following result which is needed for the proof of
Theorem \ref{together}.

\par

\begin{prop}\label{proptogether}
Let $s_j \in \mathbf R$, $q_j \in
[1,\infty] $, $j=0,1,2$ and let $\masfR (q)$ be as in \eqref{Rqfunctional}.
Also assume that \eqref{lastineq1} and \eqref{lastineq2} 
hold, with strict inequality in the last inequality in \eqref{lastineq2}
when $\masfR (q)>0$ and $s_j=d\cdot  \masfR (q) $
for some $j=0,1,2$. Then  the map $(v_1,v_2)\mapsto v_1*v_2$
on $C_0^\infty (\rr d)$ extends uniquely to a continuous map from
$ L^{q_1} _{s_1}(\rr d)
\times  L^{q_2} _{s_2}(\rr d)$ to $L^{q_0'} _{-s_0}(\rr d)$.
\end{prop}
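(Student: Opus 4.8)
\textbf{Proof proposal for Proposition \ref{proptogether}.}
The plan is to reduce the weighted convolution estimate to the family of estimates for the bilinear maps $T_{F_j}$ collected in Lemma \ref{estimatesT_F}, via a Littlewood--Paley-type decomposition of frequency space adapted to the three regions where one of $\eabs \xi$, $\eabs{\xi-\eta}$, $\eabs \eta$ dominates. Concretely, I would first fix constants $0<\delta<1$ and $R\ge 4/\delta$ and observe that the sets $\Omega_1,\dots,\Omega_5$ from \eqref{theomegasets} cover $\rr{2d}$, so that with $F$ as in \eqref{Fdef} one has, for $v_1,v_2\in C_0^\infty$,
\begin{equation*}
\eabs \xi ^{-s_0}\,(v_1*v_2)(\xi)=\sum_{j=1}^5 T_{F_j}(u_1,u_2)(\xi),
\qquad u_j=\eabs{\cdo}^{s_j}v_j,
\end{equation*}
since $\eabs\xi^{-s_0}(v_1*v_2)(\xi)=\int \eabs\xi^{-s_0}\eabs{\xi-\eta}^{-s_1}\eabs\eta^{-s_2}u_1(\eta)u_2(\xi-\eta)\,d\eta$ and $\sum_j\chi_{\Omega_j}\equiv 1$ on the domain of integration. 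Taking $L^{q_0'}$ norms and using the triangle inequality, the claimed bound $\nm{v_1*v_2}{L^{q_0'}_{-s_0}}\lesssim\nm{v_1}{L^{q_1}_{s_1}}\nm{v_2}{L^{q_2}_{s_2}}$ follows once each $\nm{T_{F_j}(u_1,u_2)}{L^{q_0'}}$ is controlled by the right-hand side.

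The second step is precisely the content of Lemma \ref{estimatesT_F}: under the standing hypotheses \eqref{lastineq1} and \eqref{lastineq2} (with the stated strict inequality when $\masfR(q)>0$ and some $s_j=d\cdot\masfR(q)$), each of the conditions (1)--(5) of Lemma \ref{estimatesTFj} is met — here one has to check that the symmetric assumptions $0\le s_j+s_k$ for $j\ne k$ together with $0\le s_0+s_1+s_2-d\cdot\masfR(q)$ imply, region by region, the asymmetric inequalities $s_0\le s_1$, $s_0\le s_2$, $0\le s_1+s_2$, $s_0\le s_1+s_2-d\cdot\masfR(q)$ appearing there, and that the side conditions on $\masfR(q)$ hold by Lemma \ref{1/2} (this is where the equivalence of \eqref{lastineq1} and \eqref{lastineq1}$'$ is used, to cover the cases $q_1\ge2$ or $q_2\ge2$ as well as $q_1,q_2<2$). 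Granting Lemma \ref{estimatesT_F}, one obtains $\nm{T_{F_j}(u_1,u_2)}{L^{q_0'}}\lesssim\nm{v_1}{L^{q_1}_{s_1}}\nm{v_2}{L^{q_2}_{s_2}}$ for all $j$, hence the desired continuity of $(v_1,v_2)\mapsto v_1*v_2$ from $L^{q_1}_{s_1}\times L^{q_2}_{s_2}$ to $L^{q_0'}_{-s_0}$ on $C_0^\infty$.

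The third step is the passage from the dense subspace $C_0^\infty$ to all of $L^{q_1}_{s_1}\times L^{q_2}_{s_2}$ and the uniqueness of the extension. Since $C_0^\infty(\rr d)$ is dense in $L^{q}_{s}(\rr d)$ for every $q\in[1,\infty]$ — more precisely in $L^q_s$ when $q<\infty$, and the proof of Proposition \ref{prop1} already handles the endpoint $q=\infty$ by treating $f\in L^\infty$, $g\in C_0^\infty$ directly and invoking density only in the finite exponent — the bounded bilinear map extends by continuity and the extension is unique on the closure; when some $q_j=\infty$ one argues, as in Proposition \ref{prop1}, by fixing the $C_0^\infty$ argument in the other slot. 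I expect the main obstacle to be not any single estimate but the bookkeeping in step two: verifying that the hypotheses of this proposition really do force all five sets of conditions in Lemma \ref{estimatesTFj} simultaneously, including the delicate matching of the strict-versus-non-strict borderline cases (the logarithmic losses in Lemma \ref{intestimates} when $s_2=d/r$, $s_1=d/r$, or $s_0=-d/r$), and confirming via Lemma \ref{1/2} that the various $\max(1/2,1/q_j)$ thresholds are consistent with $0\le\masfR(q)\le1/2$. The final sentence of Lemma \ref{estimatesT_F} — that violating \eqref{lastineq1}--\eqref{lastineq2} violates one of (1)--(5) — is not needed for this direction and can be ignored here.
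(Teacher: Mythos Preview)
Your proposal follows the paper's strategy closely: write $\eabs\xi^{-s_0}(v_1*v_2)$ as $T_F(u_1,u_2)$ with $F$ as in \eqref{Fdef}, decompose over the regions $\Omega_1,\dots,\Omega_5$, and bound each $T_{F_j}$ via Lemma \ref{estimatesT_F}. The density argument is also handled the same way.

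There is, however, one ingredient in the paper's proof that you do not mention: a duality (symmetry) reduction. Lemma \ref{estimatesTFj}(1), which controls the pieces $T_{F_1}$ and $T_{F_2}$, carries the extra hypothesis $\masfR(q)\le 1/q_0$, and this is \emph{not} implied by \eqref{lastineq1} alone (e.g.\ $q_0=10$, $q_1=q_2=4/3$ gives $\masfR(q)=2/5>1/10$). The paper therefore first proves the convolution estimate under the stronger, asymmetric condition \eqref{lastineq1}$''$, namely $\masfR(q)\le 1/q_0$ together with $\masfR(q)\le\max(1/2,\min(1/q_1,1/q_2))$, and then observes that the trilinear form $(v_0,v_1,v_2)\mapsto\langle v_1*v_2,v_0\rangle$ is symmetric under permutation of the indices $0,1,2$; by duality the estimate then holds whenever \eqref{lastineq1}$''$ or one of its two permuted versions holds, and one checks that these together cover exactly \eqref{lastineq1} (equivalently \eqref{lastineq1}$'$).

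So your outline is correct provided you take Lemma \ref{estimatesT_F} at face value, but be aware that the paper's own argument does not invoke that lemma in full generality; it instead inserts this permutation-of-indices step to get from the range where Lemma \ref{estimatesTFj} applies directly to the full range \eqref{lastineq1}. If you were asked to fill in the details, that duality step is where the ``bookkeeping'' you allude to actually gets resolved.
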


%
%
%

\begin{proof}
First we note that \eqref{lastineq1} is not fulfilled when all $q_j\ge 2$ and at least one
of them is strictly larger than $2$. The similar fact is true if the condition \eqref{lastineq1}
is replaced by
\begin{equation}\tag*{(\ref{lastineq1})$'$}
\masfR (q) \le \mathsf H (q),
\end{equation}
where $\mathsf H(q)=\min (q_0^{-1}, q_1^{-1}, q_2^{-1})$ when $q_j\le 2$,
$\mathsf H(q)=\max (q_0^{-1}, q_1^{-1}, q_2^{-1})$ when $q_j\ge 2$, $j=0,1,2$, and
$\mathsf H(q)=2^{-1}$ otherwise. Hence, we may replace the condition \eqref{lastineq1}
by \eqref{lastineq1}$'$ when proving the proposition.

\par

First we assume that
\begin{equation}\tag*{(\ref{lastineq1})$''$}
\masfR (q)\le \frac 1{q_0}\quad \text{and}\quad
\masfR (q)\le \max \left (  \frac 12, \min \left ( \frac 1{q_1},\frac 1{q_2}  \right ) \right ),
\end{equation}
and that \eqref{lastineq2} 
holds and  $v_j \in L^{q_j} _{s_j}$, $ j = 1,2$.
We express $ v_1* v_2$ in terms of $T_F $ given by \eqref{TFmap} and
$F$ given by \eqref{Fdef} as follows. Let $\Omega_j$,  $j=1,\dots ,5$,
be the same as in \eqref{theomegasets}
after $\Omega _2$ has been modified into
$$
\Omega _2 = \sets{ (\xi, \eta) \in \rr {2d}}{\eabs {\xi -\eta }<
\delta \eabs \xi }\setminus \Omega _1.
$$
Then $\cup \Omega _j=\rr {2d}$, $\Omega _j\cap
\Omega _k$ has Lebesgue measure zero when  $j\neq k$, and
\begin{multline*}
(v_1*v_2) (\xi) \eabs \xi ^s  = \int F (\xi, \eta ) u_1 (\xi - \eta) u_2 (\eta) d\eta
=T_F(u_1,u_2)
\\[1ex]
=T_{F_1}(u_1,u_2)+\cdots + T_{F_5}(u_1,u_2)
\end{multline*}
where $ u_j (\cdo ) = \eabs \cdo  ^{s_j} v_j $, $ j=1,2$,
and $ F_j = \chi_{\Omega _j}F$, $j = 1,\dots, 5.$

\par

Now, Lemma \ref{estimatesT_F} implies that the $L^{q_0'}$ norm of each
of the terms $ T_{F_j}$, $ j = 1,\dots, 5$
is bounded by $C \nm {v_1}{L ^ {q_1} _{s_1}}\nm {v_2}{L ^ {q_2} _{s_2}} $
for some positive constant $C$
which is independent of  $v_1 \in L^{q_1}_{s_1}(\rr d)$
and $v_2\in L^{q_2}_{s_2}(\rr d)$.

\par

Hence, $v_1 *v_2 \in L^{q_0'} _{-s_0}$ when \eqref{lastineq1}$''$ holds.
By duality, the same conclusion holds when  the roles for $q_j$, $j=0,1,2$ have
been interchanged. By straight forward computations it follows that \eqref{lastineq1}$'$
is fulfilled if and only if \eqref{lastineq1}$''$ or one of the dual cases of \eqref{lastineq1}$''$
are fulfilled. This gives the result.
\end{proof}

\par

\begin{proof}[Proof of Theorem \ref{together}]
The assertion (1) follows by letting $v_j=\widehat f_j$ in Proposition \ref{proptogether}.

\par

In order to prove (2), we assume that $f_j\in (\FL _{s_j}^{q_j})_{loc}$ and let
$\phi \in C_0^\infty (X)$. Then we choose $\phi _1=\phi $ and $\phi _2 \in
C_0^\infty (X)$ such that $\phi _2 =1$ on $\supp \phi$. Since $\phi _jf_j
\in \FL _{s_j}^{q_j}$, the right-hand side of
$$
f_1f_2 \phi  = (f_1\phi _1)(f_2 \phi _2)
$$
is well-defined, and defines an element in $ \FL _{s_0}^{q_0}$, in view of (1).
This gives (2).

\par

When proving (3) we first consider the case when $p_j,q_j<\infty$ for $j=1,2$. Then
$\mathscr S$ is dense in $M^{p_j,q_j}_{s_j,t_j}$ for $j=1,2$. Since $M^{p,q}_{s,t}$
decreases with $t$, and the map
$f\mapsto \eabs \cdo ^{t_0}f$ is a bijection from $M^{p,q}_{s,t+t_0}$ to $M^{p,q}_{s,t}$,
for every choices of $p,q\in [1,\infty]$ and $s,t,t_0\in \mathbf R$, it follows that we may assume
that $t_j=0$, $j=0,1,2$.

\par

We have
\begin{multline}\label{STFTproducts}
(V_{\phi}(f_1 f_2))(x,\xi )
=(2\pi ) ^{-d/2}
\big ( (V_{\phi _1}f_1)(x,\cdo )*(V_{\phi _2}f_2)(x,\cdo )\big )(\xi ),
\\[1ex]
\phi =\phi _1 \phi _2,\ \phi _j,f_j\in \mathscr S(\rr d),\ j=1,2,
\end{multline}
which follows by straight-forward application of Fourier's inversion formula.
Here the convolutions between the factors $(V_{\phi _j}f_j)(x,\xi )$, where $j=1,2$ should
be taken over the $\xi$ variable only.

\par

By applying the $L^{p_0}$ norm with respect to the $x$ variables
and using H{\"o}lder's inequality we get
$$
\nm {V_{\phi}(f_1f_2))(\cdo ,\xi )}{L^{p_0}} \le (2\pi ) ^{-d/2} (v_1*v_2)(\xi),
$$
where $v_j =  \nm {V_{\phi _j}f_j)(\cdo ,\eta )}{L^{p_j}} $. Hence by applying
the $L^{q_0}_{s_0}$ norm on the latter inequality and using Proposition
\ref{proptogether} we get
$$
\nm {f_1f_2}{M^{p_0,q_0}_{s_0,0}}\lesssim \nm {v_1}{L^{q_1}_{s_1}}\nm {v_2}{L^{q_2}_{s_2}}
\asymp \nm {f_1}{M^{p_1,q_1}_{s_1,0}}\nm {f_2}{M^{p_2,q_2}_{s_2,0}},
$$
and (3) follows in this case, since $\mathscr S$ is dense in $M^{p_j,q_j}_{s_j,0}$ for $j=1,2$.

\par

For general $p_j$ and $q_j$, (3) follows from the latter inequality
and Hahn-Banach's theorem.

\par

Finally, by interchanging the order of integration, (4) follows by similar
arguments as in the proof of (3). The proof is complete.
\end{proof}

Finally, we prove Theorem \ref{microlocal-functions-some-more}.

\begin{proof}[Proof of Theorem \ref{microlocal-functions-some-more}]
Again we only prove the result for $1<q<\infty$, leaving small
modifications when $q\in \{ 1,\infty \}$ to the reader.

\par

Assume that $(x_0,\xi _0)\notin
\WF _{\mathscr F L^{q_{1}} _{s_1}} (f_1)\cup \WF _{\mathscr F L^{q_2} _{s_2}} (f_2)$.
It is no restriction to assume that $f_j$ has compact
support and $\xi _0\notin \Sigma_{\mathscr FL^{q_j} _{s_j}}(f_j)$, $ j
=1,2$. Then $|f_j|_{\mathscr FL^{q_j,\Gamma}_{s_j}}<\infty$ for some
conic neighborhood $\Gamma$ of $\xi _0$. Furthermore, for some $\delta \in (0,1)$
and open cone $ \Gamma _1$ of $\xi _0$ such that $\overline  \Gamma
_1\subseteq \Gamma $ we have $\xi - \eta \in \Gamma$ when $\xi \in
\Gamma _1$ and $|\eta | < \delta |\xi |$.

Let $\Omega _1$ and $\Omega _2$ be the same as in \eqref{theomegasets},
$\Omega
_0=\complement (\Omega _1\cup \Omega _2)$,
and let
$$
J_k(\xi ) =  \eabs \xi ^s \int _{(\xi ,\eta )\in \Omega _k}  |\widehat
f_1(\xi -\eta )\widehat f_2(\eta )|\, d\eta
$$
for $k=0,1,2$.

%
%
%

\par

We have $C^{-1}\eabs \xi \le \eabs {\xi -\eta}\le C\eabs \xi$ when
$(\xi ,\eta )\in \Omega _1$. This gives 
\begin{multline} \label{J1estC}
J_1(\xi ) =  \eabs \xi ^{s} \int_{\Omega _1}
|\widehat f_1 (\xi - \eta)|\, |\widehat f_2 (\eta)| \, d\eta
\\[1ex]
\leq  \int_{\Omega _1} \eabs \xi ^{s} \langle \xi - \eta \rangle  ^{-s_2}
 \eabs \eta ^{-s_1}
|\chi_\Gamma \widehat f_1 (\xi - \eta)| \langle \xi - \eta \rangle ^{s_2} \, |\widehat f_2 (\eta)|
 \eabs \eta ^{s_1} \, d\eta.
\end{multline}

Now, Lemma \ref{intestimates} (1) implies that 
\begin{equation}\label{J1estno2}
\nm {J_1}{L^q(\Gamma _1)} \le C \nm {f_1}{\mathscr FL^{q_1, \Gamma} _{s_1}}
|f_2 |_{\mathscr FL^{q_2 }_{s_2}}.
\end{equation}

Similarly,
\begin{equation}\label{J1estno3}
\nm {J_2}{L^q(\Gamma _1)} \le C \nm {f_1}{\mathscr FL^{q_1} _{s_1}}
|f_2 |_{\mathscr FL^{q_2, \Gamma }_{s_2}}.
\end{equation}

\par

Consider next $\Omega _0=\complement (\Omega _1\cup \Omega _2)$. Then
$$
\sets {(\xi ,\eta )\in \Omega _0}{\xi \in \Gamma _1}\subseteq \Omega
_3\cup \Omega _4\cup \Omega _5,
$$
where $ \Omega_j $, $ j=3,4,5 $ are the same as in \eqref{theomegasets},
and
$$
J_0\le T_{F_3}(u_1,u_2)+T_{F_4}(u_1,u_2)+T_{F_5}(u_1,u_2),
$$
where $u_j(\xi )=\widehat f_j(\xi )\eabs \xi ^{s_j}$,
$F_j$ are the same as in Lemma \ref{intestimates}, and
$T_{F_j} $ are  the same as in Lemma \ref{estimatesTFj}, $j=3,4,5$. Hence it
suffices to prove that
$$
\nm {T_{F_j}(u_1,u_2)}{L^q}\le C\nm {f_1}{\mathscr
FL^{q_1} _{s_1}}\nm {f_2}{\mathscr FL^{q_2} _{s_2}}, \;\;\; j=3,4,5.
$$

These estimates follow from Lemma  \ref{estimatesT_F}
which completes the proof.
\end{proof}


\begin{thebibliography}{2000}

\bibitem{BL} {J. Bergh and J. L{\"o}fstr{\"o}m}
\emph{Interpolation Spaces, An Introduction}, Springer-Verlag, {Berlin
Heidelberg NewYork}, 1976.


\bibitem{Ho1}  L. H{\"o}rmander, The Analysis of Linear
Partial Differential Operators, vol {I}, Springer-Verlag, Berlin,
1983.

\bibitem{Hrm-nonlin} L. H\"ormander, Lectures on Nonlinear Hyperbolic Differential Equations,
Springer-Verlag, Berlin, 1997.



\bibitem{PTT} S. Pilipovi\'c, N. Teofanov, J. Toft,
Micro-local analysis in Fourier Lebesgue
. Part
I, J. Fourier Anal. Appl., 17 (3) (2011), 374 -- 407.

\bibitem{PTT2} S. Pilipovi\'c, N. Teofanov, J. Toft,
Micro-local analysis in Fourier Lebesgue and modulation spaces. Part
II, J. Pseudo-Differ. Oper. Appl. 1 (3) (2010), 341 -- 376.



\end{thebibliography}
\end{document}